\documentclass[]{interact}

\usepackage{graphicx}
\theoremstyle{plain}
\newtheorem{theorem}{Theorem}[section]
\newtheorem{lemma}[theorem]{Lemma}

\newtheorem{proposition}[theorem]{Proposition}

\theoremstyle{definition}

\theoremstyle{remark}

\numberwithin{equation}{section}

\begin{document}


\title{Jacobi-Angelesco multiple orthogonal polynomials on an $r$-star\thanks{Supported by FWO research project G.0864.16N and EOS project PRIMA 30889451.}}
\author{\name{Marjolein Leurs and Walter Van Assche\thanks{CONTACT: Walter Van Assche. Email: walter.vanassche@kuleuven.be}}
\affil{Department of Mathematics, KU Leuven, Celestijnenlaan 200B box 2400,\\ BE-3001 Leuven, Belgium.}}

\maketitle

\begin{abstract}
We investigate type I multiple orthogonal polynomials on $r$ intervals which have a common point at the
origin and endpoints at the $r$ roots of unity $\omega^j$, $j=0,1,\ldots,r-1$,
with $\omega = \exp(2\pi i/r)$. We use the weight function $|x|^\beta (1-x^r)^\alpha$, with $\alpha,\beta  >-1$ for the multiple orthogonality relations.
We give explicit formulas for the type I multiple orthogonal polynomials, the coefficients in the recurrence relation, the differential equation, and we obtain the asymptotic distribution of the zeros.
\end{abstract}

\begin{keywords}
Multiple orthogonal polynomials; Jacobi-Angelesco polynomials; recurrence relation; differential equation; asymptotic zero distribution.
\end{keywords}

\begin{amscode}
33C45; 42C05.
\end{amscode}

\section{Introduction}
Various families of multiple orthogonal polynomials have been worked out during the past few decennia, even though
the notion of multiple orthogonality goes back at least to Hermite in the framework of Hermite-Pad\'e approximation.
There are two types of multiple orthogonal polynomials. Let $\vec{n}=(n_1,n_2,\ldots,n_r)$ be a multi-index of size
$|\vec{n}| = n_1+n_2+\cdots+n_r$ and let $\mu_1,\ldots,\mu_r$ be positive measures for which all the moments exist.
Type I multiple orthogonal polynomials for $(\mu_1,\ldots,\mu_r)$ are given by a vector $(A_{\vec{n},1}, \ldots, A_{\vec{n},r})$
of $r$ polynomials, with $\deg A_{\vec{n},j} = n_j-1$, such that the following orthogonality conditions hold:
\[   \sum_{j=1}^r  \int x^k A_{\vec{n},j}(x) \, d\mu_j(x) = 0, \qquad 0 \leq k \leq |\vec{n}|-2, \]
with normalization
\[    \sum_{j=1}^r   \int x^{|\vec{n}|-1} A_{\vec{n},j}(x) \, d\mu_j(x) = 1. \]
The type II multiple orthogonal polynomial for the multi-index $(n_1,\ldots,n_r)$ is the \textit{monic} polynomial $P_{\vec{n}}$
of degree $|\vec{n}|$ for which the following orthogonality conditions hold:
\[    \int x^k P_{\vec{n}}(x)\, d\mu_j(x) = 0, \qquad   0 \leq k \leq n_j-1, \]
for $1 \leq j \leq r$. The orthogonality conditions for type I and type II multiple orthogonal polynomials give a
linear system of $|\vec{n}|$ equations for the $|\vec{n}|$ unknown coefficients of the polynomials. If the solution exists and if it is unique, then we call the multi-index $\vec{n}$ a normal index, and if all multi-indices are normal, then the measures $(\mu_1,\ldots,\mu_r)$ are a perfect system. See \cite{Aptekarev} \cite[Ch.~23]{Ismail}  
\cite[Ch.~4.3]{NikiSor} for more information on multiple orthogonal polynomials (polyorthogonal polynomials).

An important perfect system of measures was introduced by Angelesco\footnote{This is in fact Aurel Angelescu, a Romanian mathematician who wrote a PhD thesis in 1916 under supervision of Paul Appell at the Sorbonne in Paris.} in 1919 \cite{Angelesco} and later independently suggested by Nikishin \cite{Nikishin}. An Angelesco system has $r$ measures $\mu_1,\ldots,\mu_r$ where $\mu_j$ has support in an interval $\Delta_j$ and the intervals $\Delta_1,\ldots,\Delta_r$ are pairwise disjoint. Actually the intervals may be touching. Kalyagin \cite{Kalyagin} gave an explicit example of an Angelesco system which is basically a generalization of Jacobi polynomials. He considered the two
intervals $[-1,0]$ and $[0,1]$ and investigated the type II multiple orthogonal polynomials $P_{n,m}$ satisfying
\[   \int_{-1}^0  P_{n,m}(x) x^k  (1-x)^\alpha(1+x)^\beta |x|^\gamma\, dx = 0, \qquad 0 \leq k \leq n-1, \]
\[   \int_{0}^1  P_{n,m}(x) x^k  (1-x)^\alpha(1+x)^\beta |x|^\gamma\, dx = 0, \qquad 0 \leq k \leq m-1, \]
and investigated the asymptotic behavior of $P_{n,m}$ and later, with Ronveaux \cite{KalRon}, found a third order differential equation, a four term recurrence relation and the asymptotic behavior of the ratio of two
neighboring polynomials. We call these multiple orthogonal polynomials Jacobi-Angelesco polynomials. Type I Jacobi-Angelesco polynomials were only recently investigated for the case $\alpha=\beta=\gamma=0$ because they turn
up in the analysis of Alpert multiwavelets \cite{GIWVA}. In this paper we will extend these type I Legendre-Angelesco and Jacobi-Angelesco polynomials to $r$ intervals. We take a special configuration for the $r$ intervals by having one common point $0$ and placing them on an $r$-star in the complex plane, with endpoints
at the $r$ roots of unity $\omega^j$, $j=0,1,\ldots,r-1$, with $\omega = e^{2\pi i/r}$, see Figure \ref{fig:rstar}.

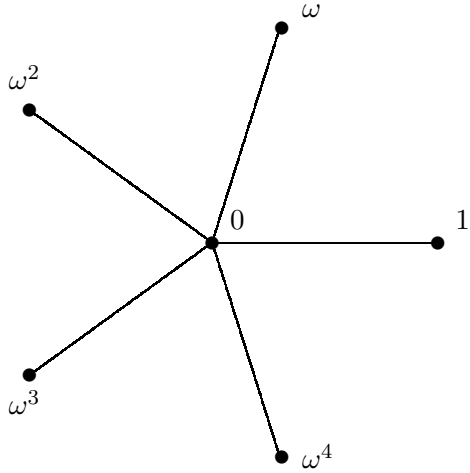
\begin{figure}[ht]
\unitlength=0.6mm
\centering
\begin{picture}(100,100)(0,0)
\put(50,50){\circle*{3}}
\put(100,50){\circle*{3}}
\put(65.45,97.55){\circle*{3}}
\put(9.55,79.39){\circle*{3}}
\put(9.55,20.61){\circle*{3}}
\put(65.45,2.45){\circle*{3}}
\put(54,53){$0$}
\put(104,53){$1$}
\put(70,100){$\omega$}
\put(5,84){$\omega^2$}
\put(5,12){$\omega^3$}
\put(70,0){$\omega^4$}
\put(50,50){\qbezier(0,0)(25,0)(50,0)}
\put(50,50){\qbezier(0,0)(7.5,23.775)(15,47.55)}
\put(50,50){\qbezier(0,0)(7.6,-23.775)(15,-47.55)}
\put(50,50){\qbezier(0,0)(-20.225,14.695)(-40.45,29.39)}
\put(50,50){\qbezier(0,0)(-20.225,-14.695)(-40.45,-29.39)}
\end{picture}
\caption{$r$-star for $r=5$ with $\omega=e^{2\pi i/5}$.}
\label{fig:rstar}
\end{figure}

To preserve the symmetry, we take a weight function $w(x) = |x|^\beta (1-x^r)^\alpha$ and the measure $\mu_j$
is supported on the interval $\Delta_j = [0,\omega^{j-1}]$, $j=1,\ldots,r$ with this weight function as its Radon-Nikodym derivative. The orthogonality properties for the type I multiple orthogonal polynomials are then
given by
\[     \sum_{j=1}^r  \int_0^{\omega^{j-1}} x^k A_{\vec{n},j}(x) |x|^\beta (1-x^r)^\alpha\, dx = 0, \qquad
    0 \leq k \leq |\vec{n}|-2, \]
and normalization
\[    \sum_{j=1}^r  \int_0^{\omega^{j-1}} x^{|\vec{n}|-1} A_{\vec{n},j}(x) |x|^\beta (1-x^r)^\alpha\, dx = 1.  \]
Observe that we are not using complex conjugation, hence the corresponding bilinear form is not an inner product.
Nevertheless the type I multiple orthogonal polynomials will exist and they are unique, at least for multi-indices on the diagonal
$\vec{n}=(n,n,\ldots,n)$ or near the diagonal $\vec{n} \pm \vec{e}_k$, where $\vec{e}_k$ is the $k$th unit vector
in $\mathbb{Z}^r$.  We will investigate these type I multiple orthogonal polynomials in Section \ref{secI}
where we give an explicit formula for the polynomials, prove their multiple orthogonality, give the recurrence
coefficients in the nearest neighbor recurrence relations and obtain a differential equation, which we use to get
the asymptotic distribution of the zeros. We give the results and the proofs for $r=2$ in full detail.
In Section \ref{secII} we consider the general case $r >1$ and again give an explicit expression for the type I multiple
orthogonal polynomials, prove their multiple orthogonality, give the recurrence coefficients of the nearest neighbor
recurrence relation near the diagonal, give a differential equation of order $r+1$, and work out the asymptotic distribution of the zeros.
The results and the proofs are more complicated and technical, and we only outline the necessary modifications of the proofs
for the case $r=2$ to general $r$.

The type II Jacobi-Angelesco polynomials are somewhat easier to analyze, because for $\vec{n} = (n,n,\ldots,n)$ they are given by a Rodrigues type
formula
\[    x^\beta (1-x^r)^\alpha P_{\vec{n}}(x) = C_n(\alpha,\beta) \frac{d^n}{dx^n} x^{\beta+n} (1-x^r)^{\alpha+n} ,  \]
where $C_n(\alpha,\beta)$ is a constant that makes $P_{\vec{n}}$ a monic polynomial. These polynomials will not be considered
in the present paper.

\section{Type I Jacobi-Angelesco polynomials for $r=2$}   \label{secI}

Type I Legendre-Angelesco polynomials appeared in \cite{GIWVA}, where they were used to expand Alpert multiwavelets.
In this case one has $r=2$  and the polynomials $(A_{n,m},B_{n,m})$ are such that
$\deg A_{n,m} = n-1$, $\deg B_{n,m} = m-1$, and the orthogonality conditions are
\[  \int_{-1}^1 \bigl( A_{n,m}(x) \chi_{[-1,0]}(x) + B_{n,m}(x) \chi_{[0,1]}(x) \bigr) x^k \, dx = 0, 
  \qquad 0 \leq k \leq n+m-2, \]
with the normalization given by
\[    \int_{-1}^1 \bigl( A_{n,m}(x) \chi_{[-1,0]}(x) + B_{n,m}(x) \chi_{[0,1]}(x) \bigr) x^{n+m-1} \, dx = 1.  \]
An explicit expression for these polynomials was given in terms of two families of polynomials $p_n$ and $q_n$
given by
\begin{equation}  \label{pn}
  p_n(x) = \sum_{k=0}^n \binom{n}{k} \binom{n+\frac{k}{2}}{n} (-1)^{n-k} x^k,  
\end{equation}
and
\begin{equation}  \label{qn}
  q_n(x) = \sum_{k=0}^n \binom{n}{k} \binom{n+\frac{k-1}{2}}{n} (-1)^{n-k} x^k.  
\end{equation}
One has (see \cite[Prop. 5 in \S 4.1]{GIWVA})
\begin{theorem}
The type I Legendre-Angelesco polynomials for multi-indices on the diagonal are given by
\[   B_{n+1,n+1}(x) = \frac12 \frac{(3n+2)!}{n!(2n+1)!} p_n(x), \quad A_{n+1,n+1}(x) = -B_{n+1,n+1}(-x), \]
and for $|n-m|=1$ one has
\[  \gamma_n B_{n+1,n}(x) = \binom{n+\frac{n}{2}}{n} q_n(x) - \binom{n+\frac{n-1}{2}}{n} p_n(x), \]
\[  \gamma_n B_{n,n+1}(x) = \binom{n+\frac{n}{2}}{n} q_n(x) + \binom{n+\frac{n-1}{2}}{n} p_n(x), \]
and
\[   A_{n+1,n}(x) = B_{n,n+1}(-x), \quad A_{n,n+1}(x) = B_{n+1,n}(-x), \]
where the normalizing constant is given by $\gamma_n = 2(\frac{n}{2}+1)_n (2n)!/(3n+1)!$.
\end{theorem}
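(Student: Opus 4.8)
The plan is to verify the orthogonality conditions directly, exploiting the reflection symmetry $x \mapsto -x$ that interchanges the two intervals $[-1,0]$ and $[0,1]$ and the weight's evenness. First I would record the basic generating-function or hypergeometric identities satisfied by $p_n$ and $q_n$ from \eqref{pn} and \eqref{qn}; in particular their values and first derivatives at $x = \pm 1$, since the orthogonality integrals $\int_{-1}^1 (\cdots) x^k\,dx$ will reduce, after the reflection trick, to boundary terms. Concretely, writing $A_{n,m}(x) = B_{m,n}(-x)$ throughout, the combined integrand $A_{n,m}(x)\chi_{[-1,0]} + B_{n,m}(x)\chi_{[0,1]}$ has a definite parity behaviour under $x \mapsto -x$, so the moment $\int_{-1}^1$ splits into $\int_0^1 [B_{n,m}(x) \pm B_{m,n}(x)] x^k\,dx$ depending on the parity of $k$; the claim is that exactly $n+m-1$ of these vanish and the last one is normalised to $1$.

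The key step is to show that the polynomials $p_n$ and $q_n$ themselves carry ``half'' of the orthogonality: I expect that $p_n$ is orthogonal to $x^k$ on $[0,1]$ against a suitable weight for $k$ in a certain range and $q_n$ against a complementary range, with the two families dovetailing so that the linear combinations $\alpha q_n + \beta p_n$ in the statement kill all moments $x^k$, $0 \le k \le n+m-2$, on $[0,1]$ simultaneously for the two near-diagonal cases $(n+1,n)$ and $(n,n+1)$. For the diagonal case this is cleanest: one checks that $\int_0^1 p_n(x) x^{2j}\,dx = 0$ for $0 \le j \le n-1$ and an analogous vanishing for the odd moments handled automatically by the antisymmetry $A_{n+1,n+1}(x) = -B_{n+1,n+1}(-x)$, leaving a single nonzero moment that pins down the constant $\tfrac12 (3n+2)!/(n!(2n+1)!)$. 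I would verify these moment identities either by substituting the explicit sum for $p_n$ and summing a terminating hypergeometric series (a Vandermonde–Chu or Saalsch\"utz evaluation), or by recognising $p_n,q_n$ up to scaling as the type II Jacobi–Angelesco polynomials $P_{n,n}$, $P_{n+1,n}$ evaluated via their Rodrigues formula and integrating by parts $n$ times.

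The main obstacle I anticipate is bookkeeping the normalisation constants: tracking the binomial factors $\binom{n+n/2}{n}$ and $\binom{n+(n-1)/2}{n}$ through the reflection and through the hypergeometric evaluations, and confirming that the stated $\gamma_n = 2(\tfrac n2+1)_n (2n)!/(3n+1)!$ is exactly the value that makes $\sum_j \int x^{n+m-1} A_{\vec n,j} = 1$. A secondary subtlety is the half-integer arguments $\tfrac{k}{2}$, $\tfrac{k-1}{2}$ in the binomial coefficients, which force a split according to the parity of the summation index and effectively double the length of every computation; organising the even- and odd-indexed contributions cleanly — perhaps by writing $p_n$ and $q_n$ in terms of a common auxiliary polynomial in $x^2$ and $x \cdot (\text{polynomial in }x^2)$ — will be essential to keeping the argument readable. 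Once the moment identities for $p_n$, $q_n$ and their boundary values are in hand, assembling them into the four displayed formulas and checking the degree constraints $\deg A_{n,m} = n-1$, $\deg B_{n,m} = m-1$ is routine.
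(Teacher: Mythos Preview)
Your overall architecture matches the paper's (which actually cites this Legendre case from \cite{GIWVA} and proves instead the Jacobi generalization, Theorems~\ref{thm22} and~\ref{thm24}, whose specialization $\alpha=\beta=0$ gives the statement): exploit the reflection $x\mapsto -x$ together with $A_{n+1,n+1}(x)=-B_{n+1,n+1}(-x)$ to reduce everything to moment integrals of $p_n$ and $q_n$ on $[0,1]$. But you have the parity backwards. With $A(x)=-B(-x)$ one gets
\[
\int_{-1}^0 A(x)x^k\,dx + \int_0^1 B(x)x^k\,dx = \bigl(1-(-1)^k\bigr)\int_0^1 B(x)x^k\,dx,
\]
so it is the \emph{even} moments that vanish for free and the \emph{odd} ones that must be checked: the core identity is $\int_0^1 p_n(x)\,x^{2j+1}\,dx=0$ for $0\le j\le n-1$, not the even-moment vanishing you state (indeed $\int_0^1 p_1(x)\,dx=-\tfrac14\ne0$). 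For the near-diagonal cases the even moments are then handled by $q_n$, via the observation $q_n(x)=p_n(x;\alpha,\beta-1)$, which reduces them to the same odd-moment identity with a shifted parameter.

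The second gap is in how you plan to verify $\int_0^1 p_n(x)x^{2j+1}\,dx=0$. Neither Vandermonde--Chu nor Saalsch\"utz applies cleanly: after inserting the explicit sum you face $\sum_k\binom{n}{k}\binom{n+k/2}{n}(-1)^{n-k}/(k+2j+2)$, and the half-integer shift $k/2$ spoils the balanced/well-poised structure those summations need. The paper's device is to test against $\bigl((1-x^2)^\ell-1\bigr)/x$ rather than $x^{2j+1}$; the beta integral then evaluates each term, and the resulting sum has the form $\sum_k\binom{n}{k}(-1)^{n-k}\pi_{n-1}(k)$ for a polynomial $\pi_{n-1}$ of degree $n-1$ in $k$, which vanishes by the finite-difference identity $\sum_k\binom{n}{k}(-1)^{n-k}k^m=0$ for $m<n$ (Lemma~\ref{lem23}). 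This trick is the missing idea. Your alternative route---identifying $p_n,q_n$ with type~II polynomials and integrating a Rodrigues formula by parts---is based on a misidentification: the diagonal type~II polynomial $P_{n,n}$ has degree $2n$, not $n$, so $p_n$ is not a rescaled $P_{n,n}$.
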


We will extend this result by taking a more general Jacobi-type weight function and by using integration on an $r$-star
with $r >2$.

\subsection{Explicit expression}
Let us first consider the case $r=2$ and the weight function $w(x) = |x|^\beta (1-x^2)^\alpha$. The type I Jacobi-Angelesco
polynomials $(A_{n,m},B_{n,m})$ then satisfy
\[  \int_{-1}^1 \bigl( A_{n,m}(x) \chi_{[-1,0]}(x) + B_{n,m}(x) \chi_{[0,1]}(x) \bigr) x^k  |x|^\beta (1-x^2)^\alpha\, dx = 0, 
  \qquad 0 \leq k \leq n+m-2, \]
with the normalization given by
\[    \int_{-1}^1 \bigl( A_{n,m}(x) \chi_{[-1,0]}(x) + B_{n,m}(x) \chi_{[0,1]}(x) \bigr) x^{n+m-1} |x|^\beta (1-x^2)^\alpha \, dx = 1.  \]
The polynomials $A_{n,m}$ and $B_{n,m}$ on the diagonal can be expressed in term of the polynomials
\begin{equation} \label{pnab}
  p_n(x;\alpha,\beta) = \sum_{k=0}^n \binom{n}{k} \frac{\Gamma(n+\alpha+\frac{\beta+k}{2}+1)}{\Gamma(n+\alpha+1)\Gamma(\frac{\beta+k}{2}+1)}
     (-1)^{n-k} x^k. 
\end{equation}

\begin{theorem}  \label{thm22}
The type I Jacobi-Angelesco polynomials on the diagonal are given by
\[   B_{n+1,n+1}(x) = \frac12 \frac{(2\alpha+\beta+2n+2)_{n+1}}{n!} p_n(x;\alpha,\beta), \quad
     A_{n+1,n+1}(x) = -B_{n+1,n+1}(-x).  \]
\end{theorem}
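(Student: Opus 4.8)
The polynomial $A_{n+1,n+1}$ has degree $n$ and $B_{n+1,n+1}$ has degree $n$, so together they carry $2n+2$ coefficients, and the orthogonality conditions together with the normalization give exactly $2n+2$ linear equations. So the strategy is: (i) write down the claimed candidates built from $p_n(x;\alpha,\beta)$ as defined in \eqref{pnab} and the symmetry relation $A_{n+1,n+1}(x) = -B_{n+1,n+1}(-x)$; (ii) verify that with this symmetry the full set of orthogonality relations collapses to a single family of integral conditions on $B_{n+1,n+1}$ alone; (iii) check those conditions hold for the explicit $p_n$; (iv) pin down the leading constant $\tfrac12 (2\alpha+\beta+2n+2)_{n+1}/n!$ via the normalization integral.

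First I would exploit the symmetry. Substituting $x\mapsto -x$ in the integral over $[-1,0]$ and using $A_{n+1,n+1}(x)=-B_{n+1,n+1}(-x)$, the weight $|x|^\beta(1-x^2)^\alpha$ being even, the orthogonality sum becomes
\[
\int_0^1 B_{n+1,n+1}(x)\,\bigl(x^k - (-1)^k x^k\bigr)\, x^\beta (1-x^2)^\alpha\, dx = 0,
\]
which is automatically satisfied for even $k$ and reduces, for odd $k=2\ell+1$ with $0\le \ell$, to
\[
\int_0^1 B_{n+1,n+1}(x)\, x^{2\ell+1+\beta}(1-x^2)^\alpha\, dx = 0, \qquad 0\le \ell \le n-1,
\]
(the range $0\le k\le 2n$ for the orthogonality, odd $k$ up to $2n-1$). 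After the substitution $x^2 = t$ this is exactly $\int_0^1 R_n(t)\, t^{(\beta-1)/2 + \ell + 1}(1-t)^\alpha\, dt = 0$ for $0\le \ell \le n-1$, where $R_n$ is the degree-$n$ polynomial with $R_n(x^2) = B_{n+1,n+1}(x)/x^{?}$ — more precisely one should split $B_{n+1,n+1}$ into its even and odd parts, and the conditions force a specific parity structure; I would check that $p_n(x;\alpha,\beta)$ has exactly the parity making this work, so that $R_n$ is (up to the substitution) a Jacobi polynomial $P_n^{(\alpha,\,(\beta-1)/2)}$ on $[0,1]$, or rather a shifted Jacobi polynomial orthogonal to $t^{(\beta+1)/2}, \ldots$ against $t^{(\beta-1)/2}(1-t)^\alpha$. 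The key computation is then to recognize the sum defining $p_n(x;\alpha,\beta)$ in \eqref{pnab} as precisely this Jacobi polynomial after $t=x^2$: the binomial coefficient $\binom{n}{k}\Gamma(n+\alpha+\tfrac{\beta+k}{2}+1)/[\Gamma(n+\alpha+1)\Gamma(\tfrac{\beta+k}{2}+1)]$ should match the hypergeometric coefficients of ${}_2F_1(-n, n+\alpha+\tfrac{\beta+1}{2}; \tfrac{\beta+1}{2}; \cdot)$ or a close variant. I would make that identification by comparing term-by-term, or alternatively verify the orthogonality integrals $\int_0^1 p_n$ against $x^{2\ell+1+\beta}(1-x^2)^\alpha$ directly by interchanging sum and integral and summing the resulting ${}_3F_2$ or ${}_2F_1$ at a special point using Chu–Vandermonde.

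The normalization step is the cleanest: compute
\[
\int_0^1 B_{n+1,n+1}(x)\, x^{2n+1+\beta}(1-x^2)^\alpha\, dx = \tfrac12,
\]
(the $k=2n+1$ term, since the even $k=2n$ term vanishes by parity, and the normalization requires the total to be $1$, contributed symmetrically by the two intervals giving $2\times\tfrac12$) — wait, one must be careful: the normalization is with $x^{n+m-1}=x^{2n+1}$, and here $n+m-1 = 2n+1$ is odd, so both intervals contribute equally and each must give $\tfrac12$. Using the term $k=n$ (the leading term $x^n$) of $p_n$, whose coefficient is $\binom{n}{n}\Gamma(n+\alpha+\tfrac{\beta+n}{2}+1)/[\Gamma(n+\alpha+1)\Gamma(\tfrac{\beta+n}{2}+1)]$, the integral reduces to a single Beta integral $\tfrac12 B\bigl(\tfrac{\beta+2n+1+1}{2},\alpha+1\bigr)$ type expression, and matching it to $\tfrac12$ fixes the constant $\tfrac12(2\alpha+\beta+2n+2)_{n+1}/n!$ via a telescoping of Gamma functions — a routine $\Gamma$-function identity.

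I expect the main obstacle to be the term-by-term verification that $p_n(x;\alpha,\beta)$ is genuinely orthogonal to $x^{2\ell+1+\beta}(1-x^2)^\alpha$ on $[0,1]$ for $0\le\ell\le n-1$: interchanging summation and integration produces a finite sum of Beta functions, and collapsing it to zero requires recognizing it as a terminating hypergeometric series summable by Chu–Vandermonde (or Saalschütz). Getting the half-integer shifts $\tfrac{\beta+k}{2}$ to line up correctly with the Jacobi-polynomial normalization is where the bookkeeping is delicate. An alternative that sidesteps the sum manipulation is to observe that the substitution $t=x^2$ maps the problem to ordinary Jacobi orthogonality on $[0,1]$, identify $B_{n+1,n+1}$ with a classical Jacobi polynomial $P_n^{(\alpha,(\beta-1)/2)}$ (suitably shifted and with the odd factor $x$ handled separately according to the parity of $\beta$), and then invoke the known explicit coefficient formula for Jacobi polynomials — but since \eqref{pnab} is stated as the definition, the cleanest exposition is probably the direct integral verification.
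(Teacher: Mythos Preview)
Your symmetry reduction and the plan to verify
\[
\int_0^1 p_n(x;\alpha,\beta)\,x^{2j+1+\beta}(1-x^2)^\alpha\,dx=0,\qquad 0\le j\le n-1,
\]
directly by interchanging sum and integral is exactly how the paper proceeds. Two parts of your sketch, however, do not work as written.

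First, the substitution $t=x^2$ and identification with a shifted Jacobi polynomial is a dead end. The polynomial $p_n(x;\alpha,\beta)$ in \eqref{pnab} has \emph{all} powers $x^0,x^1,\ldots,x^n$ with generically nonzero coefficients, so it has no definite parity and $p_n(\sqrt t)$ is not a polynomial in $t$. There is no single classical Jacobi polynomial hiding behind it; the ``parity structure'' you hope to find is simply not there. Discard this route and commit to the direct computation.

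Second, your normalization step is incorrect: the integral $\int_0^1 p_n\,x^{2n+1+\beta}(1-x^2)^\alpha\,dx$ does \emph{not} reduce to the contribution of the leading term $x^n$ alone --- every term of $p_n$ contributes a nonzero Beta integral, and the constant emerges only after summing them all. The paper treats orthogonality and normalization with one device: instead of $x^{2j+1}$ it tests against $\bigl((1-x^2)^\ell-1\bigr)/x$, reducing the problem to $S_\ell-S_0=0$ for $1\le\ell\le n$, where $S_\ell=\int_0^1 p_n\,x^{\beta-1}(1-x^2)^{\alpha+\ell}\,dx$. After the Beta integral, $S_\ell-S_0$ takes the form $\sum_{k=0}^n\binom{n}{k}(-1)^{n-k}\pi_{n-1}(k)$ with $\pi_{n-1}$ a polynomial of degree $n-1$ in $k$, which vanishes by the elementary finite-difference identity $\sum_k\binom{n}{k}(-1)^{n-k}k^m=0$ for $m<n$. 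The normalization then comes from $S_{n+1}-S_n=-\sum_k\binom{n}{k}(-1)^{n-k}/(2n+2\alpha+\beta+k+2)$, evaluated by the companion identity $\sum_k\binom{n}{k}(-1)^k/(t+k)=n!/(t)_{n+1}$. So the summation tool required is not Chu--Vandermonde or Saalsch\"utz but this simpler pair of facts, which the paper isolates as Lemma~\ref{lem23}. (Your direct approach against $x^{2j+1}$ would in fact also collapse to the same finite-difference identity, once you notice that the product of Gamma-ratios is a polynomial of degree $n-1$ in $k$; the paper's choice of test functions just makes that structure appear more transparently.)
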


\begin{proof}
If we take $A_{n+1,n+1}(x)=-B_{n+1,n+1}(-x)$, then the integral for the orthogonality conditions is
\begin{multline*}
 \int_{-1}^1 \bigl( A_{n+1,n+1}(x) \chi_{[-1,0]}(x) + B_{n+1,n+1}(x) \chi_{[0,1]}(x) \bigr) x^k |x|^\beta (1-x^r)^\alpha\, dx  \\ 
   = \bigl( 1 - (-1)^k \bigr) \int_0^1 B_{n+1,n+1}(x) x^k x^\beta (1-x^2)^\alpha\, dx .  
\end{multline*}
This is $0$ whenever $k$ is an even integer, so we only need to prove 
\begin{equation}  \label{odd}
  \int_0^1 p_n(x;\alpha,\beta) x^{2j+1} x^\beta (1-x^2)^\alpha \, dx = 0, \qquad 0 \leq j \leq n-1.  
\end{equation}
Take the polynomial $(1-x^2)^\ell -1 = \sum_{k=1}^\ell \binom{\ell}{k} (-1)^k x^{2k}$, then clearly $\bigl((1-x^2)^\ell-1)\bigr)/x$ is an odd polynomial
of degree $2\ell-1$ and thus it is sufficient to prove
\[   \int_0^1 p_n(x;\alpha,\beta) \frac{(1-x^2)^\ell-1}{x} x^\beta (1-x^2)^\alpha \, dx = S_\ell-S_0 = 0, \qquad 1 \leq \ell \leq n, \]
where
\[   S_\ell = \int_0^1 p_n(x;\alpha,\beta) x^{\beta-1}(1-x^2)^{\ell+\alpha} \, dx . \]
Using the expression \eqref{pnab} we find
\begin{align*}
   S_\ell &= \sum_{k=0}^n \binom{n}{k} \frac{\Gamma(n+\alpha+\frac{\beta+k}2+1)}{\Gamma(n+\alpha+1)\Gamma(\frac{\beta+k}{2}+1)} (-1)^{n-k}
             \int_0^1 x^{k+\beta-1}(1-x^2)^{\ell+\alpha}\, dx  \\
          &= \frac{\Gamma(\ell+\alpha+1)}{\Gamma(n+\alpha+1)} \sum_{k=0}^n \binom{n}{k} \frac{(-1)^{n-k}}{\beta+k} 
   \frac{\Gamma(n+\alpha+\frac{\beta+k}2+1)}{\Gamma(\ell+\alpha+\frac{\beta+k}{2}+1)} ,
\end{align*}
where we used the beta integral
\[    \int_0^1 x^{k+\beta-1}(1-x^2)^{\ell+\alpha}\, dx = \frac12 \textup{B}\Bigl(\frac{k+\beta}{2},\ell+\alpha+1\Bigr) 
                = \frac{\Gamma(\frac{k+\beta}2)\Gamma(\ell+\alpha+1)}{2 \Gamma(\frac{k+\beta}2+\ell+\alpha+1)} .  \]
From this we see that
\[  S_\ell-S_0 = \frac{1}{(\alpha+1)_n} \sum_{k=0}^n \binom{n}{k} (-1)^{n-k} 
        \bigl(\ell+\alpha+\frac{\beta+k}2+1\bigr)_{n-\ell} \frac{(\alpha+1)_\ell-\bigl(\alpha+\frac{\beta+k}2+1\bigr)_\ell}{\beta+k}, \]
where we used the Pochhammer symbol
\[   (a)_n = \frac{\Gamma(a+n)}{\Gamma(a)}.   \]
We see that for $1 \leq \ell \leq n$ this is of the form
\[   S_\ell-S_0 = \frac{1}{(\alpha+1)_n} \sum_{k=0}^n \binom{n}{k} (-1)^{n-k} \pi_{n-1}(k) , \]
where $\pi_{n-1}(k)$ is a polynomial of degree $n-1$ in $k$. Hence by \eqref{lem23a} in Lemma \ref{lem23}, which we prove right after this, 
we see that $S_\ell-S_0=0$
for $1 \leq \ell \leq n$, proving the relations \eqref{odd}.

For the normalization we need to show that
\[  \frac{2 n!}{(2n+2\alpha+\beta+2)_{n+1}} = 2 \int_0^1 p_n(x;\alpha,\beta) x^{2n+1} x^\beta (1-x^2)^\alpha\, dx = 2 (-1)^{n+1} (S_{n+1}-S_0). \]
Recall that $S_n-S_0=0$, so that $S_{n+1}-S_0 = S_{n+1}-S_n$, and
\[  S_{n+1}-S_n = - \sum_{k=0}^n \binom{n}{k} (-1)^{n-k} \frac{1}{2n+2\alpha+\beta+k+2}, \]
and then the result follows from \eqref{lem23b} in Lemma \ref{lem23}.
\end{proof}

In the proof of the previous theorem we used the following result.

\begin{lemma}  \label{lem23}
For all integers $n \geq 1$ one has
\begin{equation}   \label{lem23a}
   \sum_{k=0}^n \binom{n}{k} (-1)^{n-k} k^m = 0, \qquad 0 \leq m \leq n-1, 
\end{equation}
and
\begin{equation}  \label{lem23b}
   \sum_{k=0}^n \binom{n}{k} (-1)^k \frac{1}{t+k} = \frac{n!}{(t)_{n+1}}, \qquad t \in \mathbb{R} \setminus \{0,-1,-2,\ldots,-n\}.  
\end{equation}
\end{lemma}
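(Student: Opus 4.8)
The plan is to prove the two identities independently, both by classical arguments.

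For \eqref{lem23a} I would invoke the forward difference operator $\Delta$, for which $(\Delta^n f)(0) = \sum_{k=0}^n \binom{n}{k}(-1)^{n-k} f(k)$. Since $\Delta$ lowers the degree of a polynomial by exactly one, $\Delta^n$ annihilates every polynomial of degree $\le n-1$; applying this to $f(x)=x^m$ with $0 \le m \le n-1$ gives \eqref{lem23a}. Equivalently, and perhaps cleaner to write out, one expands $(e^s-1)^n = \sum_{k=0}^n \binom{n}{k}(-1)^{n-k} e^{ks}$ and reads off the coefficient of $s^m/m!$: it equals $\sum_{k=0}^n \binom{n}{k}(-1)^{n-k} k^m$, and since $(e^s-1)^n$ has a zero of order $n$ at $s=0$ this coefficient vanishes for $m \le n-1$.

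For \eqref{lem23b} I would first prove it for $t>0$ via the beta integral and then extend by a rational-function argument. For $t>0$, use $\frac{1}{t+k} = \int_0^1 x^{t+k-1}\,dx$; since the sum over $k$ is finite we may interchange summation and integration to obtain $\sum_{k=0}^n \binom{n}{k}(-1)^k \frac{1}{t+k} = \int_0^1 x^{t-1}(1-x)^n\,dx = \mathrm{B}(t,n+1) = \frac{\Gamma(t)\,n!}{\Gamma(t+n+1)} = \frac{n!}{(t)_{n+1}}$. Both sides of \eqref{lem23b} are rational functions of $t$ whose only possible poles lie in $\{0,-1,\ldots,-n\}$, and they coincide on the half-line $t>0$; hence they coincide on all of $\mathbb{R}\setminus\{0,-1,\ldots,-n\}$.

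As an algebraic alternative to the continuation step, one can instead compute the partial-fraction decomposition of $\frac{n!}{t(t+1)\cdots(t+n)}$ directly: its residue at $t=-k$ equals $\frac{n!}{\prod_{j=0,\,j\ne k}^n(j-k)} = \frac{n!}{(-1)^k k!\,(n-k)!} = (-1)^k\binom{n}{k}$, which yields \eqref{lem23b} term by term. Neither part is really an obstacle; the only care needed is the sign bookkeeping $\prod_{j=0,\,j\ne k}^n(j-k) = (-1)^k k!\,(n-k)!$ (obtained by splitting the product at $j=k$), and noting that the finiteness of the sum makes the interchange with the integral trivial.
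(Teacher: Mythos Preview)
Your proof is correct. For \eqref{lem23b} you and the paper do the same thing: expand $(1-x)^n$ by the binomial theorem inside the beta integral $\int_0^1 x^{t-1}(1-x)^n\,dx = \mathrm{B}(t,n+1)=n!/(t)_{n+1}$. You are actually a bit more careful than the paper here, since the integral representation only converges for $t>0$ and you supply the analytic/rational continuation (or partial fractions) to cover all $t\in\mathbb{R}\setminus\{0,-1,\ldots,-n\}$, which the paper leaves implicit.

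For \eqref{lem23a} the approaches differ. The paper differentiates Newton's binomial identity $(x+y)^n=\sum_k\binom{n}{k}x^ky^{n-k}$ $m$ times in $x$ and sets $x=1$, $y=-1$, obtaining $\sum_k\binom{n}{k}(-1)^{n-k}(k-m+1)_m=0$; since $(k-m+1)_m$ is monic of degree $m$ in $k$, the identity for $k^m$ follows by induction on $m$. Your route via $(\Delta^n f)(0)$ or via the Taylor coefficients of $(e^s-1)^n$ is a genuinely different packaging of the same combinatorics: it avoids the small inductive step and makes the ``degree drop'' reason (order-$n$ vanishing at $s=0$, or $\Delta$ lowering degree) completely transparent. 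Both are standard; yours is arguably the cleaner of the two.
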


\begin{proof}
From Newton's binomial formula
\[   (x+y)^n = \sum_{k=0}^n \binom{n}{k} x^k y^{n-k}, \]
we find, after differentiating $m$ times with respect to $x$
\[    \frac{n!}{(n-m)!} (x+y)^{n-m} = \sum_{k=m}^n \binom{n}{k} \frac{k!}{(k-m)!} x^{k-m} y^{n-k} .  \]
If we take $x=1$ and $y=-1$, then for $0 \leq m \leq n-1$
\[  \sum_{k=m}^n \binom{n}{k}(-1)^{n-k} (k-m+1)_m = 0. \]
Since $(k-m+1)_m$ is a monic polynomial of degree $m$ in $k$, this is equivalent with \eqref{lem23a}.

For the second identity we use the beta integral
\[  \int_0^1 x^{t-1}(1-x)^n\, dx = \textup{B}(t,n+1) = \frac{n!}{(t)_{n+1}}, \]
and if we expand $(1-x)^n$ then we also find
\[  \int_0^1 x^{t-1}(1-x)^n\, dx = \sum_{k=0}^n \binom{n}{k} (-1)^{k} \int_0^1 x^{k+t-1} \, dx = \sum_{k=0}^n \binom{n}{k} \frac{(-1)^k}{t+k}. \]
Comparison of both integrals gives \eqref{lem23b}.  
\end{proof}

For the type I Jacobi-Angelesco polynomials above and below the diagonal we need a second family of polynomials
\begin{equation} \label{qnab}
  q_n(x;\alpha,\beta) = \sum_{k=0}^n \binom{n}{k} \frac{\Gamma(n+\alpha+\frac{\beta+k-1}{2}+1)}{\Gamma(n+\alpha+1)\Gamma(\frac{\beta+k-1}{2}+1)}
     (-1)^{n-k} x^k. 
\end{equation}
Observe that $q_n(x;\alpha,\beta) = p_n(x;\alpha,\beta-1)$. We then have

\begin{theorem}  \label{thm24}
The type I Jacobi-Angelesco polynomials near the diagonal are given by
\begin{eqnarray}
 \gamma_n(\alpha,\beta)  B_{n+1,n}(x) &=& \nu_{n,1}(\alpha,\beta) q_n(x;\alpha,\beta) - \nu_{n,2}(\alpha,\beta) p_n(x;\alpha,\beta),  \label{B2-down} \\
 \gamma_n(\alpha,\beta) B_{n,n+1}(x) &=&  \nu_{n,1}(\alpha,\beta) q_n(x;\alpha,\beta) + \nu_{n,2}(\alpha,\beta) p_n(x;\alpha,\beta),  \label{B2-up} 
\end{eqnarray}
and
\begin{equation}  \label{A2-updown}
                 A_{n+1,n}(x) = B_{n,n+1}(-x), \quad A_{n,n+1}(x) = B_{n+1,n}(-x),
\end{equation}
where $\nu_{n,1}(\alpha,\beta)$ and $\nu_{n,2}(\alpha,\beta)$ are the leading coefficients of $p_n(x;\alpha,\beta)$ and $q_n(x;\alpha,\beta)$
respectively
\[  \nu_{n,1}(\alpha,\beta) = \frac{\Gamma\bigl(n+\alpha+\frac{\beta+n}2 +1 \bigr)}{\Gamma(n+\alpha+1) \Gamma \bigl( \frac{\beta+n}2 +1 \bigr)}, 
   \quad
     \nu_{n,2}(\alpha,\beta) = \frac{\Gamma\bigl(n+\alpha+\frac{\beta+n+1}2  \bigr)}{\Gamma(n+\alpha+1) \Gamma \bigl( \frac{\beta+n+1}2  \bigr)}, 
\]
and
\[  \gamma_n(\alpha,\beta) = \frac{2 n! \nu_{n,1}(\alpha,\beta)}{(2n+2\alpha+\beta+1)_{n+1}} .   \]
\end{theorem}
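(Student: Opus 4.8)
The plan is to treat the formulas in the statement as an ansatz and check directly that the vector it defines satisfies the type~I orthogonality and normalization for the multi-index in question; since the near-diagonal indices $(n,n)+\vec e_k$ are normal, uniqueness then identifies this vector with the type~I Jacobi--Angelesco polynomials. I would begin by justifying the symmetry relations \eqref{A2-updown}. The map $x\mapsto -x$ interchanges the intervals $[-1,0]$ and $[0,1]$ and leaves the weight $|x|^\beta(1-x^2)^\alpha$ invariant, so if $(A_{a,b},B_{a,b})$ is type~I for the index $(a,b)$ then $\bigl(B_{a,b}(-\cdot),A_{a,b}(-\cdot)\bigr)$ satisfies all the defining relations for the index $(b,a)$ (the normalization sign is $(-1)^{a+b-1}=+1$ because here $a+b=2n+1$ is odd). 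By uniqueness $A_{b,a}(x)=B_{a,b}(-x)$, which for $(b,a)=(n+1,n)$ and $(n,n+1)$ gives exactly \eqref{A2-updown}. This also settles the degrees: $B_{n,n+1}$ has degree $n$ (its $x^n$-coefficient is $2\nu_{n,1}\nu_{n,2}/\gamma_n\neq 0$), so $A_{n+1,n}(x)=B_{n,n+1}(-x)$ has degree $n=(n+1)-1$, while the $x^n$-terms of $\nu_{n,1}q_n$ and $\nu_{n,2}p_n$ cancel (because $\nu_{n,1}$ and $\nu_{n,2}$ are by definition the leading coefficients of $p_n$ and $q_n$), so $\deg B_{n+1,n}=n-1$ as required.

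Next, using \eqref{A2-updown} and the substitution $x\mapsto -x$ on $[-1,0]$, the orthogonality integral over $[-1,1]$ collapses to
\[ \int_0^1\bigl[B_{n+1,n}(x)+(-1)^k B_{n,n+1}(x)\bigr]\,x^{k+\beta}(1-x^2)^\alpha\,dx,\qquad 0\le k\le 2n-1. \]
Inserting the ansatz, for even $k=2j$ the bracket equals $\tfrac{2\nu_{n,1}}{\gamma_n}q_n(x;\alpha,\beta)$ and for odd $k=2j+1$ it equals $-\tfrac{2\nu_{n,2}}{\gamma_n}p_n(x;\alpha,\beta)$, with $0\le j\le n-1$ in both cases. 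The odd-$k$ conditions are precisely \eqref{odd} from the proof of Theorem~\ref{thm22}. For the even-$k$ conditions I would use the identity $q_n(x;\alpha,\beta)=p_n(x;\alpha,\beta-1)$ noted before the statement: then
\[ \int_0^1 q_n(x;\alpha,\beta)\,x^{2j+\beta}(1-x^2)^\alpha\,dx = \int_0^1 p_n(x;\alpha,\beta-1)\,x^{2j+1+(\beta-1)}(1-x^2)^\alpha\,dx, \]
which is \eqref{odd} with $\beta$ replaced by $\beta-1$, hence zero for $0\le j\le n-1$. So both parity classes of orthogonality relations reduce to the already-established odd-power orthogonality of $p_n$.

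For the normalization I would take $k=2n$ (even), where the bracket above is $\tfrac{2\nu_{n,1}(\alpha,\beta)}{\gamma_n(\alpha,\beta)}q_n(x;\alpha,\beta)$. Writing again $q_n(x;\alpha,\beta)=p_n(x;\alpha,\beta-1)$ and invoking the normalization computation inside the proof of Theorem~\ref{thm22} with $\beta\mapsto\beta-1$ gives
\[ \int_0^1 p_n(x;\alpha,\beta-1)\,x^{2n+1+(\beta-1)}(1-x^2)^\alpha\,dx = \frac{n!}{(2n+2\alpha+\beta+1)_{n+1}}, \]
so the normalization integral equals $\tfrac{2\nu_{n,1}(\alpha,\beta)}{\gamma_n(\alpha,\beta)}\cdot\tfrac{n!}{(2n+2\alpha+\beta+1)_{n+1}}$; setting this equal to $1$ yields exactly the stated value of $\gamma_n(\alpha,\beta)$. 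Finally I would record the closed forms for $\nu_{n,1}$ and $\nu_{n,2}$ by reading off the $k=n$ term of \eqref{pnab} and \eqref{qnab} and simplifying the gamma factors.

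I do not anticipate a genuine obstacle: the whole argument rides on the observation $q_n(x;\alpha,\beta)=p_n(x;\alpha,\beta-1)$, which turns every new orthogonality and normalization requirement into an instance of what was already proved for $p_n$ in Theorem~\ref{thm22}. The only points demanding care are the bookkeeping of parities together with the shift $\beta\mapsto\beta-1$ — making sure the powers of $x$ align so that \eqref{odd} is literally applicable — and the (routine) verification that the $x^n$-terms in $\nu_{n,1}q_n-\nu_{n,2}p_n$ cancel, so that $B_{n+1,n}$ really has degree $n-1$ and the degree constraints of the type~I problem are met.
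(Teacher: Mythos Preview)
Your proof is correct and follows essentially the same route as the paper: take the stated formulas as an ansatz, use $A_{n+1,n}(x)=B_{n,n+1}(-x)$ to reduce the integral to $[0,1]$, split by parity, and reduce both parity classes to \eqref{odd} via $q_n(x;\alpha,\beta)=p_n(x;\alpha,\beta-1)$, with the normalization handled the same way after the shift $\beta\mapsto\beta-1$. Your abstract symmetry derivation of \eqref{A2-updown} from uniqueness is a small extra remark---the paper simply postulates \eqref{A2-updown} as part of the ansatz and verifies directly---but the core verification is identical.
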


\begin{proof}
The degree of the polynomial $B_{n+1,n}$ is $n-1$ since the leading coefficients of $p_n$ and $q_n$ are cancelled by subtracting the polynomials, 
but the degree of $B_{n,n+1}$ is $n$ since we add the polynomials now. So it remains to prove the orthogonality. By \eqref{A2-updown} we see that
\begin{multline*}
 \int_{-1}^0 x^k A_{n+1,n}(x) |x|^\beta (1-x^2)^\alpha \, dx + \int_0^1 x^k B_{n+1,n}(x) x^\beta (1-x^2)^\alpha \, dx  \\
    = (-1)^k \int_0^1 x^k B_{n,n+1}(x) x^\beta (1-x^2)^\alpha\, dx + \int_0^1 x^k B_{n+1,n}(x) x^\beta (1-x^2)^\alpha\, dx . 
\end{multline*}
By using \eqref{B2-down}--\eqref{B2-up} this is equal to
\begin{multline*}   \left( 1+(-1)^k \right) \frac{\nu_{n,1}}{\gamma_n} \int_0^1 x^k q_n(x;\alpha,\beta) x^\beta (1-x^2)^\alpha\, dx  \\
   - \left( 1-(-1)^k \right) \frac{\nu_{n,2}}{\gamma_n} \int_0^1 x^k p_n(x;\alpha,\beta) x^\beta (1-x^2)^\alpha\, dx .  
\end{multline*}
When $k=2j+1$ is odd $(0 \leq j \leq n-1$) this is
\[  -2 \frac{\nu_{n,2}}{\gamma_n} \int_0^1 x^{2j+1} p_n(x;\alpha,\beta) x^\beta (1-x^2)^\alpha\, dx , \]
which vanishes because of \eqref{odd}. When $k=2j$ is even $(0 \leq j \leq n-1)$ the integral reduces to
\[   2 \frac{\nu_{n,1}}{\gamma_n} \int_0^1 x^{2j} q_n(x;\alpha,\beta) x^\beta (1-x^2)^\alpha\, dx  , \]
and this vanishes because $q_n(x;\alpha,\beta) = p_n(x;\alpha,\beta-1)$ and again by \eqref{odd}. This proves the orthogonality.
For the normalization we need
\begin{multline*}
 1 = \int_{-1}^0 x^{2n} A_{n+1,n}(x) |x|^\beta (1-x^2)^\alpha \, dx + \int_0^1 x^{2n} B_{n+1,n}(x) x^\beta (1-x^2)^\alpha \, dx  \\
    = 2 \frac{\nu_{n,1}}{\gamma_n} \int_0^1 x^{2n} q_n(x;\alpha,\beta) x^\beta (1-x^2)^\alpha\, dx . 
\end{multline*}
The latter integral is
\begin{align*}
   \int_0^1 x^{2n} q_n(x;\alpha,\beta) x^\beta (1-x^2)^\alpha\, dx &= \int_0^1 x^{2n+1} p_n(x;\alpha,\beta-1) x^{\beta-1}(1-x^2)^\alpha\, dx \\
      &= \frac{n!}{(2n+2\alpha+\beta+1)_{n+1}}, 
\end{align*}
which gives the normalizing constant $\gamma_n(\alpha,\beta)$. 
\end{proof}

\subsection{Recurrence relation}
Multiple orthogonal polynomials satisfy a system of linear recurrence relations connecting the nearest neighbors \cite{wva}.
For the type II multiple orthogonal polynomials they are
\begin{eqnarray*}
   xP_{n,m}(x) = P_{n+1,m}(x) + c_{n,m} P_{n,m}(x) + a_{n,m}P_{n-1,m}(x) + b_{n,m} P_{n,m-1}(x), \\
   xP_{n,m}(x) = P_{n,m+1}(x) + d_{n,m} P_{n,m}(x) + a_{n,m}P_{n-1,m}(x) + b_{n,m} P_{n,m-1}(x).
\end{eqnarray*}
The recurrence relations are very similar for the type I multiple orthogonal polynomials: 
\begin{eqnarray*}
   xQ_{n,m}(x) = Q_{n-1,m}(x) + c_{n-1,m} Q_{n,m}(x) + a_{n,m}Q_{n+1,m}(x) + b_{n,m} Q_{n,m+1}(x), \\
   xQ_{n,m}(x) = Q_{n,m-1}(x) + d_{n,m-1} Q_{n,m}(x) + a_{n,m}Q_{n+1,m}(x) + b_{n,m} Q_{n,m+1}(x).
\end{eqnarray*}
The same recurrence relation holds with $Q_{n,m}$ replaced by $A_{n,m}$ or $B_{n,m}$.
The coefficients $a_{n,m},b_{n,m}$ can be computed as 
\[   a_{n,m} = \frac{\kappa_{n,m}}{\kappa_{n+1,m}}, \quad b_{n,m} = \frac{\lambda_{n,m}}{\lambda_{n,m+1}}, \]
where $\kappa_{n,m}$ and $\lambda_{n,m}$ are the leading coefficients of $A_{n,m}$ and $B_{n,m}$ respectively. This can easily be seen by
checking the leading coefficients in the recurrence relation.  If we write
\[    A_{n,m}(x) = \kappa_{n,m} x^{n-1} + \delta_{n,m} x^{n-2} + \cdots, \quad  B_{n,m}(x) = \lambda_{n,m} x^{m-1} + \epsilon_{n,m} x^{m-2} + \cdots, \]
then one also has
\[    c_{n-1,m} = \frac{\delta_{n,m}}{\kappa_{n,m}} - \frac{\delta_{n+1,m}}{\kappa_{n+1,m}} - 
     \frac{\lambda_{n,m}}{\lambda_{n,m+1}} \frac{\kappa_{n,m+1}}{\kappa_{n,m}}, \]
\[   d_{n,m-1} = \frac{\epsilon_{n,m}}{\lambda_{n,m}} - \frac{\epsilon_{n,m+1}}{\lambda_{n,m+1}} - 
     \frac{\lambda_{n+1,m}}{\lambda_{n,m}} \frac{\kappa_{n,m}}{\kappa_{n+1,m}}. \] 
For the Jacobi-Angelesco polynomials near the diagonal we then have

\begin{proposition}
The recurrence coefficients for the Jacobi-Angelesco polynomials on the diagonal are
\[    a_{n,n} = \frac{n(n+\alpha)(2n+2\alpha+\beta)}{(3n+2\alpha+\beta+1)(3n+2\alpha+\beta)(3n+2\alpha+\beta-1)}, \quad   b_{n,n} = a_{n,n}, \]
and
\[    c_{n-1,n} = \frac{(2n+2\alpha+\beta-1) \Gamma(n+\alpha+\frac{n+\beta}{2}-1) \Gamma(\frac{n+\beta+1}{2})}
                       {(3n+2\alpha+\beta-1) \Gamma(n+\alpha+\frac{n+\beta-1}{2}) \Gamma(\frac{n+\beta}{2})}
    , \quad   d_{n,n-1} = -c_{n-1,n}  .  \]
\end{proposition}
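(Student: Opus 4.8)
The plan is to obtain everything from the four displayed identities preceding the statement, which express $a_{n,m},b_{n,m}$ through the leading coefficients $\kappa_{n,m},\lambda_{n,m}$ of $A_{n,m},B_{n,m}$, and $c_{n-1,m},d_{n,m-1}$ through these together with the subleading coefficients $\delta_{n,m},\epsilon_{n,m}$. So the whole computation reduces to reading off these coefficients for the indices $(n,n)$, $(n+1,n)$ and $(n,n+1)$ from Theorems \ref{thm22} and \ref{thm24} and then simplifying ratios of Gamma functions. Throughout I use that the recurrence relations remain valid with $Q_{n,m}$ replaced by $A_{n,m}$ or $B_{n,m}$, and the convention $\nu_{n,j} = \nu_{n,j}(\alpha,\beta)$.

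For the leading coefficients, Theorem \ref{thm22} gives $B_{n,n}(x) = \frac{1}{2}\frac{(2\alpha+\beta+2n)_n}{(n-1)!}\, p_{n-1}(x;\alpha,\beta)$ and $A_{n,n}(x) = -B_{n,n}(-x)$, while Theorem \ref{thm24} gives $\gamma_n B_{n,n+1} = \nu_{n,1} q_n + \nu_{n,2} p_n$ (of degree $n$) and $A_{n+1,n}(x) = B_{n,n+1}(-x)$. Since $\nu_{n,1},\nu_{n,2}$ are the leading coefficients of $p_n,q_n$, this yields $\kappa_{n,n} = (-1)^n \frac{1}{2}\frac{(2\alpha+\beta+2n)_n}{(n-1)!}\nu_{n-1,1}$, the same value for $\lambda_{n,n}$ without the sign, $\kappa_{n+1,n} = (-1)^n 2\nu_{n,1}\nu_{n,2}/\gamma_n$, and $\lambda_{n,n+1} = 2\nu_{n,1}\nu_{n,2}/\gamma_n$. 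In particular $\kappa_{n,n}/\kappa_{n+1,n} = \lambda_{n,n}/\lambda_{n,n+1}$, so $a_{n,n} = b_{n,n}$. After substituting $\gamma_n = 2 n!\, \nu_{n,1}/(2n+2\alpha+\beta+1)_{n+1}$ the ratio becomes $a_{n,n} = \frac{n\,(2\alpha+\beta+2n)_n}{2\,(2n+2\alpha+\beta+1)_{n+1}}\,\frac{\nu_{n-1,1}}{\nu_{n,2}}$; writing the two $\nu$'s out with Gamma functions, the factors $\Gamma(\frac{\beta+n-1}{2}+1) = \Gamma(\frac{\beta+n+1}{2})$ cancel and $\nu_{n-1,1}/\nu_{n,2} = 2(n+\alpha)/(3n+2\alpha+\beta-1)$, while the Pochhammer quotient telescopes to $(2\alpha+\beta+2n)_n/(2n+2\alpha+\beta+1)_{n+1} = (2n+2\alpha+\beta)/[(3n+2\alpha+\beta)(3n+2\alpha+\beta+1)]$, which gives the stated formula for $a_{n,n} = b_{n,n}$.

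For $c_{n-1,n} = \frac{\delta_{n,n}}{\kappa_{n,n}} - \frac{\delta_{n+1,n}}{\kappa_{n+1,n}} - \frac{\lambda_{n,n}}{\lambda_{n,n+1}}\frac{\kappa_{n,n+1}}{\kappa_{n,n}}$ I also need the subleading coefficients. The key elementary fact is that, from the explicit sum \eqref{pnab}, the coefficient of $x^{n-1}$ in $p_n(x;\alpha,\beta)$ equals $-n\,\nu_{n,2}(\alpha,\beta)$, because $\frac{\beta+n-1}{2}+1 = \frac{\beta+n+1}{2}$; its coefficient of $x^{n-2}$ is the $k=n-2$ term of \eqref{pnab}, and via $q_n(x;\alpha,\beta) = p_n(x;\alpha,\beta-1)$ the same formulas give the corresponding coefficients of $q_n$. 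Feeding these into $A_{n,n}(x) = -B_{n,n}(-x)$ gives $\delta_{n,n}/\kappa_{n,n} = (n-1)\,\nu_{n-1,2}/\nu_{n-1,1}$, and writing the $\nu$'s out one recognizes $\nu_{n-1,2}/\nu_{n-1,1}$ as exactly the Gamma-function quotient $\Gamma(n+\alpha+\frac{n+\beta}{2}-1)\Gamma(\frac{n+\beta+1}{2})/[\Gamma(n+\alpha+\frac{n+\beta-1}{2})\Gamma(\frac{n+\beta}{2})]$ occurring in the statement. From $A_{n+1,n}(x) = B_{n,n+1}(-x)$ one gets $\delta_{n+1,n}/\kappa_{n+1,n}$ as minus the ratio of the coefficients of $x^{n-1}$ and $x^n$ in $B_{n,n+1}$, a linear combination of $\nu_{n,1}\nu_{n,2}(\alpha,\beta-1)$ and $\nu_{n,2}^2$ over $2\nu_{n,1}\nu_{n,2}$; and from $A_{n,n+1}(x) = B_{n+1,n}(-x)$, where $B_{n+1,n}$ has degree $n-1$ because its $x^n$-coefficient cancels, one gets $\kappa_{n,n+1}$ in terms of $\lambda_{n+1,n} = -\frac{n}{\gamma_n}(\nu_{n,1}\nu_{n,2}(\alpha,\beta-1) - \nu_{n,2}^2)$. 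Substituting these three pieces together with the already known $b_{n,n} = \lambda_{n,n}/\lambda_{n,n+1}$ into the formula for $c_{n-1,n}$, and using $\nu_{n,2}(\alpha,\beta-1)/\nu_{n,1}(\alpha,\beta) = (\beta+n)/(3n+2\alpha+\beta)$ together with the analogous one-step Gamma identities, the three terms collapse to the single quotient claimed for $c_{n-1,n}$.

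Finally, $b_{n,n} = a_{n,n}$ and $d_{n,n-1} = -c_{n-1,n}$ can also be read off directly from the reflection symmetry in Theorems \ref{thm22} and \ref{thm24}: near the diagonal one has $B_{n,m}(x) = \pm A_{m,n}(-x)$, with sign $-1$ on the diagonal and $+1$ when $|n-m| = 1$. Substituting this into the first recurrence relation for $B_{n,n}$ and then replacing $x$ by $-x$ reproduces exactly the second recurrence relation for $A_{n,n}$, provided $b_{n,n} = a_{n,n}$ and $d_{n,n-1} = -c_{n-1,n}$; this is in fact how one obtains those two identities without computation. The main obstacle is the computation of $c_{n-1,n}$ in the previous paragraph: it needs careful bookkeeping of the $x^{n-1}$- and $x^{n-2}$-coefficients under the reflection $x \mapsto -x$, under the combinations $\nu_{n,1} q_n \pm \nu_{n,2} p_n$, under the index shift $n \mapsto n-1$, and under the shift $\beta \mapsto \beta-1$ coming from $q_n(x;\alpha,\beta) = p_n(x;\alpha,\beta-1)$, and then one must check that the resulting sum of three ratios of products of Gamma functions at half-integer shifted arguments telescopes to the stated compact form. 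Everything else, namely the computation of $a_{n,n}$ and the symmetry argument, is short.
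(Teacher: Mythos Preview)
Your proof is correct and follows essentially the same approach as the paper: compute the leading coefficients $\kappa_{\cdot,\cdot},\lambda_{\cdot,\cdot}$ from Theorems~\ref{thm22} and~\ref{thm24}, take the ratios to get $a_{n,n}=b_{n,n}$, and then use the subleading coefficients for $c_{n-1,n}$ (the paper only says ``the calculus is a bit longer'' for this last step, whereas you actually spell out the relevant Gamma identities). Your additional symmetry argument, deriving $b_{n,n}=a_{n,n}$ and $d_{n,n-1}=-c_{n-1,n}$ directly from $B_{n,m}(x)=\pm A_{m,n}(-x)$ by reflecting the first recurrence into the second, is a clean alternative that the paper does not make explicit.
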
  
 
\begin{proof}
The two leading coefficients can easily be obtained from Theorem \ref{thm22} and Theorem \ref{thm24} and give
\[   \lambda_{n+1,n+1} = \frac12 \frac{(2n+2\alpha+\beta+2)_{n+1}}{n!} \nu_{n,1}(\alpha,\beta), \quad \kappa_{n+1,n+1} = (-1)^{n+1} \lambda_{n+1,n+1},  \]
and
\[   \lambda_{n,n+1} = \frac{\nu_{n,2}(\alpha,\beta)(2n+2\alpha+\beta+1)_{n+1}}{n!}, \quad \kappa_{n+1,n} = (-1)^n \lambda_{n,n+1}. \] 
From this the coefficients $a_{n,n}$ and $b_{n,n}$ follow easily using the formulas above. 
For $c_{n-1,n}$ and $d_{n,n-1}$ one also needs the last but one leading coefficient and the calculus is a bit longer.
\end{proof}

\subsection{Differential equation}
\begin{theorem}  \label{thm:diff2}
For $\alpha,\beta >-1$ the polynomial $p_n(x;\alpha,\beta)$ given in \eqref{pnab} satisfies the third order differential equation
\begin{multline}  \label{diff2}
  x(1-x^2)y''' + \bigl(\beta+2 -(2\alpha+\beta+6)x^2\bigr) y'' + (n-1)(3n+4\alpha+2\beta+6) xy' \\
         = n(n-1)(2n+2\alpha+\beta+2) y. 
\end{multline} 
\end{theorem}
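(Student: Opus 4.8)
The plan is to verify \eqref{diff2} by the direct substitution method: plug the explicit expansion $y=p_n(x;\alpha,\beta)=\sum_{k=0}^n c_k x^k$, where
\[
 c_k=\binom nk\,\frac{\Gamma(n+\alpha+\frac{\beta+k}2+1)}{\Gamma(n+\alpha+1)\,\Gamma(\frac{\beta+k}2+1)}\,(-1)^{n-k},
\]
into the differential equation and show that the coefficient of every power of $x$ on the two sides agrees. First I would differentiate term by term and write
\[
 x(1-x^2)y'''=\sum_k k(k-1)(k-2)c_k x^{k-2}-\sum_k k(k-1)(k-2)c_k x^{k},
\]
\[
 \bigl(\beta+2-(2\alpha+\beta+6)x^2\bigr)y''=(\beta+2)\sum_k k(k-1)c_k x^{k-2}-(2\alpha+\beta+6)\sum_k k(k-1)c_k x^{k},
\]
and likewise $(n-1)(3n+4\alpha+2\beta+6)xy'=(n-1)(3n+4\alpha+2\beta+6)\sum_k kc_k x^k$, then reindex the sums carrying $x^{k-2}$ by $k\mapsto k+2$. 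Collecting the coefficient of $x^k$, equation \eqref{diff2} becomes equivalent to the two-step recurrence
\[
 (k+1)(k+2)(k+\beta+2)\,c_{k+2}=\bigl[k(k-1)(k-2)+(2\alpha+\beta+6)k(k-1)-(n-1)(3n+4\alpha+2\beta+6)k+n(n-1)(2n+2\alpha+\beta+2)\bigr]c_k
\]
for $0\le k\le n$, with the convention $c_{k+2}=0$ when $k+2>n$ (a degree count shows the left-hand side of \eqref{diff2} has degree at most $n=\deg p_n$, so no higher powers need be considered).

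Next I would evaluate the two-step ratio directly from the closed form of $c_k$. Using $\Gamma(z+1)=z\Gamma(z)$ on the numerator and denominator Gamma factors, $\frac{\beta+k+2}{2}=\frac{\beta+k}{2}+1$, and $\binom n{k+2}/\binom nk=(n-k)(n-k-1)/[(k+1)(k+2)]$, one obtains
\[
 \frac{c_{k+2}}{c_k}=\frac{(n-k)(n-k-1)}{(k+1)(k+2)}\cdot\frac{2n+2\alpha+\beta+k+2}{\beta+k+2}.
\]
Since $\beta>-1$ one has $k+\beta+2>1$ for all $k\ge 0$, so this is well defined, and $\alpha,\beta>-1$ keeps every Gamma argument positive. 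Substituting into the recurrence, the factor $c_k$ cancels and the whole statement collapses to the single polynomial identity in $k$
\[
 (n-k)(n-k-1)(2n+2\alpha+\beta+k+2)=k(k-1)(k-2)+(2\alpha+\beta+6)k(k-1)-(n-1)(3n+4\alpha+2\beta+6)k+n(n-1)(2n+2\alpha+\beta+2).
\]

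Finally I would verify this identity. Both sides are cubic polynomials in $k$ (with $n,\alpha,\beta$ as parameters), so it is enough to check equality at four values; $k=0$ and $k=1$ are immediate, while $k=n-1$ and $k=n$ are convenient because the left-hand side vanishes there and the right-hand side then factors through $(n-1)$, respectively $n(n-1)$, after which collecting powers of $n$ shows the remaining bracket is identically zero. (Equivalently, one expands both cubics and matches the coefficients of $k^3,k^2,k,1$.) The proof has no conceptual obstacle — it is essentially bookkeeping — and the only points needing care are the Gamma-function simplification of $c_{k+2}/c_k$ and the endpoint indices $k\in\{n-1,n\}$, where $c_{k+2}$ vanishes identically: checking that the right-hand bracket also vanishes there is precisely what is responsible for $p_n$, a polynomial of degree $n$, being a solution of the third-order equation \eqref{diff2} whose other solutions are not polynomials.
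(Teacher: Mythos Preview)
Your proof is correct but takes a different route from the paper. The paper derives \eqref{diff2} structurally, by first establishing a \emph{lowering} relation $p_n'(x;\alpha,\beta)=np_{n-1}(x;\alpha+1,\beta+1)$ and a \emph{raising} relation $\bigl(x^\beta(1-x^2)^\alpha p_n(x;\alpha,\beta)\bigr)'=x^{\beta-1}(1-x^2)^{\alpha-1}\pi_{n+2}(x)$, where $\pi_{n+2}$ is identified (via the orthogonality \eqref{odd}) as a specific linear combination of $p_{n+2}(x;\alpha-2,\beta-2)$ and $xp_{n+1}(x;\alpha-1,\beta-1)$. The differential equation then drops out by applying the lowering twice, multiplying by the shifted weight, and applying the raising once. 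Your approach, by contrast, is a direct coefficient comparison: you reduce \eqref{diff2} to the two-term recurrence $(k+1)(k+2)(k+\beta+2)c_{k+2}=[\cdots]c_k$, compute $c_{k+2}/c_k$ from the explicit formula, and reduce everything to a single cubic identity in $k$ that you verify at four points.

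Your method is more elementary and entirely self-contained---it uses only the explicit expansion \eqref{pnab} and needs neither the orthogonality relations nor any integration. The paper's method is less computational and, more importantly, explains \emph{why} the equation has the shape it does: it factorizes the differential operator as a composition of raising and lowering operators, which is exactly the mechanism that generalizes cleanly to the order-$(r+1)$ equation in Theorem~\ref{thm36}. So both arguments are valid; yours is quicker for $r=2$, while the paper's scales.
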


\begin{proof}
From \eqref{pnab} it is easy to see that
\begin{equation}   \label{pndif}
    p_n'(x;\alpha,\beta) = np_{n-1}(x;\alpha+1,\beta+1), 
\end{equation}
and then of course also
\begin{equation}   \label{pndif2}
   p_n''(x;\alpha,\beta) = n(n-1)p_{n-2}(\alpha+2,\beta+2). 
\end{equation}
Hence differentiation lowers the degree $n$ but increases the parameters $\alpha$ and $\beta$.
On the other hand, one has for $\alpha,\beta>1$
\begin{multline}  \label{wpndif}
   \bigl( x^\beta (1-x^2)^\alpha p_n(x;\alpha,\beta) \bigr)' = x^{\beta-1}(1-x^2)^{\alpha-1} \bigl( (2n+2\alpha+\beta)p_{n+2}(x;\alpha-2,\beta-2) \\
      -(3n+4\alpha+2\beta) x p_{n+1}(x;\alpha-1,\beta-1) \bigr).  
\end{multline}
Indeed, if we work out the left hand side, then
\[    \bigl( x^\beta (1-x^2)^\alpha p_n(x;\alpha,\beta) \bigr)' = x^{\beta-1}(1-x^2)^{\alpha-1}  \pi_{n+2}(x), \]
where $\pi_{n+2}$ is a polynomial of degree $n+2$ given by
\begin{equation}  \label{pi1}
   \pi_{n+2}(x) = \bigl( \beta(1-x^2)-2\alpha x^2 \bigr) p_n(x;\alpha,\beta) +x(1-x^2)p_n'(x;\alpha,\beta).  
\end{equation}
One can check, by comparing coefficients and using \eqref{pnab}, that
\begin{equation}  \label{pi2}
   \pi_{n+2}(x) = (2n+2\alpha+\beta)p_{n+2}(x;\alpha-2,\beta-2)
      -(3n+4\alpha+2\beta) x p_{n+1}(x;\alpha-1,\beta-1), 
\end{equation}
but alternatively one can also observe that for $\alpha,\beta >0$
\begin{eqnarray*}
   \int_0^1 x^{\beta-2} (1-x^2)^{\alpha-1} \pi_{n+2}(x) x^{2k+1} \,dx &=& \int_0^1 \bigl( x^\beta (1-x^2)^\alpha p_n(x;\alpha,\beta) \bigr)' x^{2k}\, dx \\
                            & = & - 2k \int_0^1  x^\beta (1-x^2)^\alpha p_n(x;\alpha,\beta) x^{2k-1}\, dx \\
                            & = & 0, \qquad 0 \leq k \leq n,
\end{eqnarray*} 
where we used integration by parts and the orthogonality to odd powers \eqref{odd}. Therefore $\pi_{n+2}$ is a polynomial of degree $n+2$ which
satisfies $n+1$ orthogonality conditions for odd powers with the weight $x^{\beta-2}(1-x^2)^{\alpha-1}$ on $[0,1]$, so it belongs to a linear
space of polynomials of dimension $2$ and can be written as a linear combination of two linearly independent polynomials from that space. 
For $\alpha,\beta>1$ the polynomials
$p_{n+2}(x;\alpha-2,\beta-2)$ and $xp_{n+1}(x;\alpha-1,\beta-1)$ are two such polynomials and by \eqref{odd} they are orthogonal to odd powers
$x^{2k+1}$ for $0 \leq k \leq n$ with weight $x^{\beta-2}(1-x^2)^{\alpha-1}$ on $[0,1]$, hence $\pi_{n+2}(x) = a_n p_{n+2}(x;\alpha-2,\beta-2)
+b_n xp_{n+1}(x;\alpha-1,\beta-1)$. The coefficients $a_n$ and $b_n$ can be found by comparing the leading coefficient and the constant coefficient.

To find the differential equation we multiply \eqref{pndif2} by $x^{\beta+2}(1-x^2)^{\alpha+2}$ and differentiate to find 
\begin{multline*}
  x^{\beta+1}(1-x^2)^{\alpha+1} \Bigl[(\beta+2)(1-x^2)-2x^2(\alpha+2)\Bigr] p_n''(x;\alpha,\beta) \\ 
   + x^{\beta+2}(1-x^2)^{\alpha+2} p_n'''(x;\alpha,\beta) \\
  = n(n-1)x^{\beta+1}(1-x^2)^{\alpha+1} \Bigl( (2n+2\alpha+\beta+2)p_n(x;\alpha,\beta) \\
   - (3n+4\alpha+2\beta+6)xp_{n-1}(x,\alpha+1,\beta+1) \Bigr),
\end{multline*}
where we used the property \eqref{wpndif} for the right hand side. Remove the common factor $x^{\beta+1}(1-x^2)^{\alpha+1}$ and
use \eqref{pndif}, then the differential equation \eqref{diff2} follows.
\end{proof}

\subsection{Asymptotic zero behavior}

\begin{theorem}   \label{thm:zero2}
The asymptotic zero distribution of the polynomials $p_n(x;\alpha,\beta)$ given in \eqref{pnab} is independent of $\alpha$ and $\beta$
and is given by a measure on $[0,1]$ with density
\begin{equation}  \label{u2}
   u_2(x) = \frac{\sqrt{3}}{2\pi} \frac{(1+\sqrt{1-x^2})^{1/3} + (1-\sqrt{1-x^2})^{1/3}}{x^{1/3} \sqrt{1-x^2}}, \qquad 0 < x < 1.  
\end{equation}
\end{theorem}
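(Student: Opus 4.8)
The plan is to exploit the third order differential equation \eqref{diff2} together with the standard machinery connecting differential equations with polynomial coefficients to the asymptotic zero distribution of a sequence of polynomials whose degrees grow linearly. First I would rescale: write $p_n(x;\alpha,\beta) = \prod_{j=1}^n (x - x_{j,n})$ up to a constant, introduce the normalized zero counting measure $\nu_n = \frac1n \sum_j \delta_{x_{j,n}}$, and consider its Cauchy (Stieltjes) transform $W_n(x) = \frac1n \frac{p_n'(x)}{p_n(x)} = \int \frac{d\nu_n(t)}{x-t}$. The aim is to show $W_n \to W$ locally uniformly off $[0,1]$, where $W$ is an explicit algebraic function, and then recover the density $u_2$ from the jump of $W$ across $[0,1]$ via the Stieltjes–Perron inversion $u_2(x) = \frac{1}{2\pi i}\bigl(W(x-i0) - W(x+i0)\bigr)$.

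The key step is to pass to the limit in \eqref{diff2}. Divide the differential equation by $n^3 p_n$ and express everything through $W_n$: one has $p_n'/p_n = nW_n$, $p_n''/p_n = n^2 W_n^2 + (W_n^2)' \text{-type lower order} = n^2 W_n^2 + O(n)$, and $p_n'''/p_n = n^3 W_n^3 + O(n^2)$ — more precisely $p_n''/p_n = n^2 W_n^2 + n W_n'$ and $p_n'''/p_n = n^3 W_n^3 + 3 n^2 W_n W_n' + n W_n''$. Substituting into \eqref{diff2} and keeping only the $n^3$ terms (assuming, as is standard and can be justified a posteriori by the boundedness of $W_n$ on compact subsets of $\mathbb{C}\setminus[0,1]$, that $W_n$ and its derivatives stay bounded there), the surviving balance is
\begin{equation*}
  x(1-x^2) W^3 - (2\alpha+\beta+6)x^2 \cdot 0 + 3 x W \cdot 0 = \ldots
\end{equation*}
— care is needed here, so let me instead keep the correct leading terms: the $y'''$ term contributes $x(1-x^2)\, n^3 W^3$, the $y''$ term contributes $-(2\alpha+\beta+6)x^2 \cdot n^2 W^2$ which is lower order, the $(n-1)(3n+\cdots)xy'$ term contributes $3n^3 x W$, and the right side contributes $2 n^3 x W^0$-free term $= 2n^3 x \cdot$ (coefficient) — actually $n(n-1)(2n+2\alpha+\beta+2) \sim 2n^3$. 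Dividing by $n^3$, the limiting algebraic equation is
\begin{equation*}
  x(1-x^2)\, W^3 + 3x\, W - 2 = 0,
\end{equation*}
where I have used that the coefficient of $xy'$ is asymptotically $3n^3$ and of $y$ is asymptotically $2n^3$, while $\alpha,\beta$ enter only at order $n^2$, which already explains the claimed independence of $\alpha$ and $\beta$. Then I would verify directly that $W(x) = u_2$'s Stieltjes transform solves this cubic, or conversely solve the depressed cubic $W^3 + \frac{3}{1-x^2} W - \frac{2}{x(1-x^2)} = 0$ by Cardano's formula and check that its branch with $W(x)\sim 1/x$ at infinity has imaginary part on $[0,1]$ equal to $-\pi u_2(x)$; the cube roots $(1\pm\sqrt{1-x^2})^{1/3}$ and the factor $x^{-1/3}$ in \eqref{u2} are exactly the signature of Cardano applied to this equation, so this is a finite (if slightly fussy) algebraic verification.

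The main obstacle is the analytic justification that one may take $n\to\infty$ inside the differential equation and drop the lower order terms — i.e. establishing that $\{W_n\}$ is a normal family on $\mathbb{C}\setminus[0,1]$ with all subsequential limits bounded, so that the formal $n^3$-balance is legitimate and the limit is unique. One clean route is: (i) a priori containment of the zeros in $[0,1]$ — which I would get from the orthogonality-type relations \eqref{odd} or, more robustly, from the Rodrigues/differential structure, showing $p_n(\cdot;\alpha,\beta)$ has all zeros real and in $(0,1)$; (ii) then $|W_n(x)| \le 1/\mathrm{dist}(x,[0,1])$ automatically, giving normality; (iii) any locally uniform limit $W$ satisfies the cubic, which has a unique solution branch behaving like $1/x$ at $\infty$ and mapping $\mathbb{C}\setminus[0,1]$ into the lower/upper half-planes appropriately, hence the full sequence converges; (iv) Stieltjes–Perron inversion then yields \eqref{u2}, and one checks $\int_0^1 u_2 = 1$ and $u_2 \ge 0$ to confirm it is a probability density. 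I would present steps (i)–(iv) in that order, doing the cubic-solving and the inversion explicitly and only sketching the normal-families argument, since the latter is by now routine in the multiple orthogonal polynomial literature (cf. the analogous WKB/differential-equation analyses cited in the paper).
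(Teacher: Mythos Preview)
Your proposal is correct and follows essentially the same route as the paper: pass the third-order ODE \eqref{diff2} through $p_n^{(j)}/p_n$ expressed via the Stieltjes transform $S_n$ of the zero-counting measure, divide by $n^3 p_n$, take $n\to\infty$ to obtain the cubic $z(1-z^2)S^3+3zS-2=0$ (with $\alpha,\beta$ dropping out at order $n^2$), select the branch with $zS(z)\to 1$, and recover $u_2$ by Stieltjes--Perron inversion. The only cosmetic difference is that the paper phrases the compactness step via Helly's selection principle and closes with the Grommer--Hamburger theorem, whereas you phrase it as a normal-families argument with the bound $|S_n(z)|\le 1/\mathrm{dist}(z,[0,1])$; these are equivalent.
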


\begin{proof}
Let $x_{1,n},x_{2,n},\ldots,x_{n,n}$ be the zeros of $p_n(x;\alpha,\beta)$. Since this is an Angelesco system, it is known that these zeros 
(which are the zeros of $B_{n+1,n+1}$) are simple and on the interval $(0,1)$ (see, e.g, \cite[Prop.~3.4 in Ch.~4.3]{NikiSor}).
The normalized zero counting measure is 
\[  \mu_n = \frac{1}{n} \sum_{j=1}^n \delta_{x_{j,n}}, \]
and its Stieltjes transform is
\[    S_n(z) = \int_0^1 \frac{d\mu_n(x)}{z-x} = \frac{1}{n} \frac{p_n'(z;\alpha,\beta)}{p_n(z;\alpha,\beta)}.  \]
The sequence $(\mu_n)_n$ is a sequence of probability measures on the compact interval $[0,1]$, and hence by Helley's selection principle
\cite[\S 25]{billingsley}, it contains
a subsequence $(\mu_{n_k})_k$ that converges weakly to a probability measure $\mu$ on $[0,1]$, i.e.,
\[   \lim_{k \to \infty}  \int_0^1 f(x)\, d\mu_{n_k}(x) = \lim_{k \to \infty} \frac{1}{n_k} \sum_{j=1}^{n_k} f(x_{j,n_k}) = \int_0^1 f(x)\, d\mu(x), \]
for every continuous function $f$ on $[0,1]$. The weak limit $\mu$ can depend on the subsequence, but we will show it is independent of the
choice of converging subsequence.
Observe that $p_n'(z) = n p_n(z) S_n(z)$ so that
\[   p_n''(z) = np_n'(z) S_n(z) + np_n(z) S_n'(z) = n^2 p_n(z) \left( S_n^2(z) + \frac{1}{n} S_n'(z) \right), \]
and 
\[   p_n'''(z) = n^3p_n(z) \left( S_n^3(z) + \frac{3}{n} S_n(z)S_n'(z) + \frac{1}{n^2} S_n''(z) \right).  \] 
Insert this in the differential equation \eqref{diff2} then
\begin{multline}  \label{diffS}
   z(1-z^2)n^3 p_n(z) \left( S_n^3 + \frac{3}{n} S_n S_n' + \frac{1}{n^2} S_n'' \right)
     + [\beta+2-(2\alpha+\beta+6)z^2] n^2 p_n(z) \left( S_n^2 + \frac{1}{n} S_n' \right) \\
     + (n-1)( 3n+4\alpha+2\beta+6) znp_n(z) S_n - n(n-1)(2n+2\alpha+\beta+2)p_n(z) = 0.  
\end{multline}
The weak convergence of the sequence $(\mu_{n_k})_k$ to $\mu$ implies that $S_{n_k}$ converges uniformly on compact subsets of
$\mathbb{C} \setminus [0,1]$ to the Stieltjes transform $S$ of $\mu$,
\[   S(z) = \int_0^1 \frac{d\mu(x)}{z-x}. \]
But then also $S_{n_k}'$ and $S_{n_k}''$ converge uniformly on compact subsets of $\mathbb{C} \setminus [0,1]$ to $S'$ and $S''$, respectively.
Then taking the limit for $n=n_k \to \infty$ in \eqref{diffS}, after dividing by $n^3p_n(z)$, gives the algebraic equation
\[    z(1-z^2) S^3(z) + 3zS(z) - 2 = 0.  \]
Observe that this equation does not contain $\alpha$ and $\beta$ anymore. This algebraic equation has three solutions, and we need the solution that
gives a Stieltjes transform, in particular we need the solution which is analytic on $\mathbb{C} \setminus [0,1]$ and $\lim_{z \to \infty} zS(z) = 1$. 
Solving the cubic equation gives the following three solutions
\[  S_1(z) = \frac{z^4-z^2+\bigl( (-1+(1-z^2)^{-1/2}) (z^2-1)z^2 \bigr)^{2/3}}{(z^3-z^2)\bigl( (-1+(1-z^2)^{-1/2})(z^2-1)z^2 \bigr)^{1/3}}, \]
\[ S_2(z) = - \frac{(1-i\sqrt{3}) \bigl( (-1+(1-z^2)^{-1/2})(z^2-1)^2z^2 \bigr)^{2/3} + (1+i\sqrt{3})z^2(z^2-1)}
                {(2z^3-2z) \bigl( (-1+(1-z^2)^{-1/2})(z^2-1)^2z^2 \bigr)^{1/3}}, \]
\[ S_3(z) = - \frac{(1+i\sqrt{3}) \bigl( (-1+(1-z^2)^{-1/2})(z^2-1)^2z^2 \bigr)^{2/3} + (1-i\sqrt{3})z^2(z^2-1)}
                {(2z^3-2z) \bigl( (-1+(1-z^2)^{-1/2})(z^2-1)^2z^2 \bigr)^{1/3}}. \]
One can check that
\[   \lim_{z \to \infty} zS_1(z) = -2, \quad \lim_{z\to \infty} zS_2(z) = \lim_{z \to \infty} zS_3(z) = 1, \]
hence either $S_2$ or $S_3$ is the desired solution. From the Stieltjes-Perron inversion formula (or Sokhotsky-Plemelj formula)
\[   u(x) = - \frac{1}{\pi} \lim_{\epsilon \to 0+} \Im S(x+i\epsilon)  , \qquad 0 < x < 1, \]
we find that $S_2$ is the correct solution
and it is the Stieltjes transform of the density \eqref{u2}. Hence every convergent subsequence has the same limit, 
and from the Grommer-Hamburger theorem \cite{Geronimo} it follows that $\mu_n$ converges weakly to the measure with density $u_2$.
\end{proof}

\section{Type I Jacobi-Angelesco polynomials for general $r$}  \label{secII}
We now consider the type I Jacobi-Angelesco polynomials on the $r$-star for general $r \geq 1$. The results and the proofs are similar to the case
$r=2$ but they are more complicated and technical. This is why we decided to explain the case $r=2$ in detail and only give the results and the modifications in the proof for the general case in this section. The type I Jacobi-Angelesco polynomials for the multi-index $(n_1,n_2,\ldots,n_r)$
on the $r$-star (see Figure \ref{fig:rstar} 
for $r=5$) and parameters $\alpha,\beta >-1$ are given by the vector of polynomials $(A_{\vec{n},1}^{(\alpha,\beta)},\ldots,A_{\vec{n},r}^{(\alpha,\beta)})$ which is uniquely defined by
\begin{enumerate}
  \item degree conditions: the degree of $A_{\vec{n},j}^{(\alpha,\beta)}$ is $n_j-1$,
  \item orthogonality conditions 
  \begin{equation}  \label{orthor}
   \sum_{j=1}^r \int_0^{\omega^{j-1}} x^k A_{\vec{n},j}^{(\alpha,\beta)}(x) |x|^\beta (1-x^r)^\alpha\, dx = 0, \qquad 0 \leq k \leq |\vec{n}|-2, 
  \end{equation} 
  \item normalization condition:
   \begin{equation}  \label{normr}
   \sum_{j=1}^r \int_0^{\omega^{j-1}} x^{|\vec{n}|-1} A_{\vec{n},j}^{(\alpha,\beta)}(x) |x|^\beta (1-x^r)^\alpha\, dx = 1.  
   \end{equation}
\end{enumerate}
We have used the weight $|x|^\beta(1-x^r)^\alpha$ on the $r$-star so that we can use the rotational symmetry and $\omega=e^{2\pi i/r}$ 
is the primitive $r$th root of unity.
 
\subsection{Explicit expression}
The polynomials $A_{\vec{n},j}^{(\alpha,\beta)}$ on the diagonal can be expressed in terms of the polynomials
\begin{equation}  \label{pnabr}
    p_n(x;\alpha,\beta) = \sum_{k=0}^n \binom{n}{k} \frac{\Gamma(n+\alpha+\frac{\beta+k}{r}+1)}{\Gamma(n+\alpha+1)\Gamma(\frac{\beta+k}{r}+1)}
     (-1)^{n-k} x^k.
\end{equation}
Observe that for $r=2$ this coincides with \eqref{pnab}.

\begin{theorem}   \label{thm31}
The type I Jacobi-Angelesco polynomials on the diagonal $\vec{n}=(n+1,$ $n+1,\ldots,n+1)$ are given by
\[   A_{\vec{n},j}^{(\alpha,\beta)}(x) = \lambda_{n+1,r}^{(\alpha,\beta)} p_n(\omega^{-j+1}x;\alpha,\beta), \]
with normalizing constant
\[  \lambda_{n+1,r}^{(\alpha,\beta)} = \frac{1}{r} \frac{(rn+r\alpha+\beta+r)_{n+1}}{n!} .  \]
\end{theorem}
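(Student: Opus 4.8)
The plan is to verify that the vector $\bigl(A_{\vec n,1}^{(\alpha,\beta)},\dots,A_{\vec n,r}^{(\alpha,\beta)}\bigr)$ with $A_{\vec n,j}^{(\alpha,\beta)}(x)=\lambda_{n+1,r}^{(\alpha,\beta)}\,p_n(\omega^{-j+1}x;\alpha,\beta)$ meets the three conditions characterising the type~I polynomials: the degree condition, the orthogonality relations \eqref{orthor}, and the normalisation \eqref{normr}. The degree condition is immediate, since $p_n$ has degree $n=n_j-1$ and scaling the argument by the nonzero constant $\omega^{-j+1}$ preserves the degree; exhibiting such a vector proves existence, and uniqueness then holds because the diagonal multi-index is normal for this system.

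The computational heart is the rotational symmetry of the weight. In the $j$th term of \eqref{orthor} substitute $x=\omega^{j-1}t$ with $t\in[0,1]$; then $|x|^\beta=t^\beta$, $(1-x^r)^\alpha=(1-t^r)^\alpha$ because $\omega^r=1$ (so no branch issue arises along the segment), $dx=\omega^{j-1}\,dt$, and $A_{\vec n,j}^{(\alpha,\beta)}(\omega^{j-1}t)=\lambda_{n+1,r}^{(\alpha,\beta)}\,p_n(t;\alpha,\beta)$, whence
\[
 \int_0^{\omega^{j-1}} x^k A_{\vec n,j}^{(\alpha,\beta)}(x)\,|x|^\beta(1-x^r)^\alpha\,dx
 =\lambda_{n+1,r}^{(\alpha,\beta)}\,\omega^{(k+1)(j-1)}\int_0^1 t^{k+\beta}\,p_n(t;\alpha,\beta)(1-t^r)^\alpha\,dt .
\]
Summing over $j=1,\dots,r$ introduces the geometric sum $\sum_{\mu=0}^{r-1}(\omega^{k+1})^\mu$, which equals $r$ when $r\mid k+1$ and $0$ otherwise. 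Hence among $0\le k\le|\vec n|-2=r(n+1)-2$ every condition with $k\not\equiv-1\pmod r$ is automatic, and writing the survivors as $k=rm-1$ reduces \eqref{orthor} to
\[
 \int_0^1 p_n(t;\alpha,\beta)\,t^{rm-1}\,t^\beta(1-t^r)^\alpha\,dt=0,\qquad 1\le m\le n,
\]
the exact $r$-analogue of \eqref{odd}; for \eqref{normr} the only surviving term is $k=r(n+1)-1$ (that is $m=n+1$), carrying the factor $r$.

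To prove this orthogonality I would transcribe the argument of Theorem~\ref{thm22}. Since $\bigl((1-t^r)^m-1\bigr)/t=\sum_{k=1}^m\binom{m}{k}(-1)^k t^{rk-1}$ is a polynomial and these span $\langle t^{r-1},t^{2r-1},\dots,t^{rn-1}\rangle$ as $m$ runs over $1,\dots,n$, it suffices to show $S_m-S_0=0$ for $1\le m\le n$, where $S_m=\int_0^1 p_n(t;\alpha,\beta)\,t^{\beta-1}(1-t^r)^{m+\alpha}\,dt$. The substitution $u=t^r$ converts each monomial integral into a beta integral, and using $\Gamma\bigl(\tfrac{\beta+k}{r}\bigr)\big/\Gamma\bigl(\tfrac{\beta+k}{r}+1\bigr)=r/(\beta+k)$ gives
\[
 S_m=\frac{\Gamma(m+\alpha+1)}{\Gamma(n+\alpha+1)}\sum_{k=0}^n\binom{n}{k}\frac{(-1)^{n-k}}{\beta+k}\,
 \frac{\Gamma\bigl(n+\alpha+\tfrac{\beta+k}{r}+1\bigr)}{\Gamma\bigl(m+\alpha+\tfrac{\beta+k}{r}+1\bigr)},
\]
which is the $S_\ell$ formula from the proof of Theorem~\ref{thm22} with $\tfrac{\beta+k}{2}$ replaced by $\tfrac{\beta+k}{r}$. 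Exactly as there, $S_m-S_0=\frac{1}{(\alpha+1)_n}\sum_k\binom nk(-1)^{n-k}\pi_{n-1}(k)$ with $\pi_{n-1}$ a polynomial of degree $n-1$ in $k$, because $(\alpha+1)_m-\bigl(\alpha+\tfrac{\beta+k}{r}+1\bigr)_m$ vanishes at $\beta+k=0$ and so is divisible by $\beta+k$, so \eqref{lem23a} yields $S_m-S_0=0$. For the normalisation, $S_n=S_0$ turns $S_{n+1}-S_0$ into $S_{n+1}-S_n$, which telescopes to $-\sum_k\binom nk(-1)^{n-k}/(rn+r\alpha+\beta+r+k)$; evaluating this by \eqref{lem23b} with parameter $rn+r\alpha+\beta+r$, and tracking signs through $\int_0^1 p_n(t;\alpha,\beta)\,t^{r(n+1)-1+\beta}(1-t^r)^\alpha\,dt=(-1)^{n+1}(S_{n+1}-S_0)$ together with the factor $r$ from the symmetry step, produces $r\,\lambda_{n+1,r}^{(\alpha,\beta)}\int_0^1 p_n(t;\alpha,\beta)\,t^{r(n+1)-1+\beta}(1-t^r)^\alpha\,dt=1$ precisely for the stated $\lambda_{n+1,r}^{(\alpha,\beta)}=\tfrac1r(rn+r\alpha+\beta+r)_{n+1}/n!$.

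The argument is conceptually a verbatim lift of the $r=2$ case, so the real difficulty is bookkeeping: one must keep the congruence $k\equiv-1\pmod r$ and the reindexing $k=rm-1$ straight while summing over the $r$ branches, and must remember that the quantities $S_m$ are literally meaningful only for $\beta>0$, so that the identities $S_m-S_0=0$ in the range $-1<\beta\le0$ are obtained by analytic continuation in $\beta$ (both the explicit expressions above and the orthogonality integrals $\int_0^1 p_n\,t^{rm-1+\beta}(1-t^r)^\alpha\,dt$ being analytic there).
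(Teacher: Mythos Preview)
Your proposal is correct and follows essentially the same route as the paper: reduce the sum over the $r$ rays to a single integral on $[0,1]$ via the root-of-unity identity, then prove the orthogonality \eqref{modr} by the $S_\ell-S_0$ trick with $(1-t^r)^\ell-1$ and Lemma~\ref{lem23}, and compute the normalisation from $S_{n+1}-S_n$. Your treatment is in fact slightly more careful than the paper's sketch, in that you make the beta-integral substitution $u=t^r$ explicit and flag the need for analytic continuation in $\beta$ when $-1<\beta\le 0$.
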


\begin{proof}
The degree conditions are clearly satisfied. The orthogonality conditions \eqref{orthor} become
\[   \lambda_{n+1,r}^{(\alpha,\beta)} \sum_{j=1}^r (\omega^{j-1})^{k+1} \int_0^1 x^k p_n(x;\alpha,\beta) x^\beta (1-x^r)^\alpha\, dx = 0 , \]
for $0 \leq k \leq rn+r-2$. Since $\omega$ is the primitive $r$th root of unity, we have
\[   \sum_{j=1}^r (\omega^{j-1})^{k+1} = \begin{cases} 0, & \textup{if } k+1 \not\equiv 0 \bmod r, \\
                                                       r, & \textup{if } k+1 \equiv 0 \bmod r, 
                                         \end{cases}  \]
so we only need to prove 
\begin{equation}   \label{modr}
   \int_0^1 x^{rj-1} p_n(x;\alpha,\beta) x^\beta (1-x^r)^\alpha\, dx = 0, \qquad 1 \leq j \leq n.
\end{equation}
We will consider the polynomials $\bigl((1-x^r)^\ell-1 \bigr)/x$ of order $r\ell-1$ and show that $S_\ell-S_0 = 0$ for $1 \leq \ell\leq n$, where
\[   S_\ell = \int_0^1  p_n(x;\alpha,\beta) x^{\beta-1} (1-x^r)^{\alpha+\ell}\, dx, \]
as we did in the proof of Theorem \ref{thm22}. It turns out that for $1 \leq \ell \leq n$
\[ S_\ell - S_0 = \frac{1}{r \Gamma(n+\alpha+1)} \sum_{k=0}^n \binom{n}{k} \pi_{n-1,\ell}\left(\frac{\beta+k}{r} \right), \]
where $\pi_{n-1,\ell}$ is a polynomial of degree $n-1$, and this vanishes because of \eqref{lem23a} in Lemma \ref{lem23}.
For the normalization we need to prove
\begin{equation}     \label{norm}
   S_{n+1}-S_0 = S_{n+1}-S_n = (-1)^{n+1} \frac{n!}{(rn+r\alpha+\beta+r)_{n+1}}, 
\end{equation}
and this follows from \eqref{lem23b} in Lemma \ref{lem23}.
\end{proof}

Next, we will show that the type I Jacobi-Angelesco polynomials above the diagonal can be written as a linear combination of $r$ polynomials
$p_n(x;\alpha,\beta-j)$ with $0 \leq j \leq r-1$.
\begin{theorem}   \label{thm32}
Let $\vec{n}=(n,n,\ldots,n)$ and $\vec{e}_k$ be the unit vector in $\mathbb{N}^r$ with 1 on the $k$th position. The type I Jacobi-Angelesco
polynomials $A_{\vec{n}+\vec{e}_k,j}^{(\alpha,\beta)}$ are given by
\begin{equation}   \label{Aplus}
  A_{\vec{n}+\vec{e}_k,j}^{(\alpha,\beta)}(x) = A_{j-k \bmod r}(\omega^{-j+1}x) \omega^{-k+1} ,
\end{equation}
where the polynomials $A_{\ell}$, $0 \leq \ell \leq r-1$ are given by
\begin{equation}   \label{Aell}
   \tau_{n,r}^{(\alpha,\beta)} A_\ell(x) = \sum_{j=0}^{r-1} \frac{\omega^{\ell j}}{\nu_n^{(\alpha,\beta-j)}} p_n(x;\alpha,\beta-j), 
\end{equation}
with normalizing constant
\begin{equation}   \label{tau}
    \tau_{n,r}^{(\alpha,\beta)} = \frac{r n! \Gamma(n+\alpha+1) \Gamma(\frac{\beta+n+1}{r}) \Gamma(rn+r\alpha+\beta+1)}
                                         {\Gamma(n+\alpha+ \frac{\beta+n+1}{r}) \Gamma(rn+n+r\alpha+\beta+2)}, 
\end{equation}
and $\nu_n^{(\alpha,\beta)}$ is the leading coefficient of $p_n(x;\alpha,\beta)$
\begin{equation}   \label{nu}
  \nu_n^{(\alpha,\beta)} = \frac{\Gamma(n+\alpha+\frac{\beta+n}{r}+1)}{\Gamma(n+\alpha+1) \Gamma(\frac{\beta+n}{r}+1)} . 
\end{equation}
\end{theorem}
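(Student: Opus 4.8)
The plan is to transplant the proof of Theorem~\ref{thm24} to the $r$-star: I will check that the vector $(A_{\vec n+\vec e_k,j}^{(\alpha,\beta)})_{j=1}^{r}$ defined by \eqref{Aplus}--\eqref{Aell} has the required degrees and satisfies the orthogonality relations \eqref{orthor} and the normalization \eqref{normr} for the multi-index $\vec n+\vec e_k$, and then appeal to the normality of the near-diagonal multi-indices (asserted in the Introduction) to conclude that it is \emph{the} type~I vector. To avoid clashing with the fixed index $k$ of $\vec e_k$, I denote the power of $x$ in \eqref{orthor}--\eqref{normr} by $m$, so the orthogonality range is $0\le m\le|\vec n+\vec e_k|-2=rn-1$ and the normalization power is $m=rn$.

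For the degrees, \eqref{Aell} gives that the coefficient of $x^n$ in $\tau_{n,r}^{(\alpha,\beta)}A_\ell$ is $\sum_{i=0}^{r-1}\omega^{\ell i}$, which equals $r$ if $\ell\equiv0\bmod r$ and $0$ otherwise; hence $\deg A_0=n$ (leading coefficient $r/\tau_{n,r}^{(\alpha,\beta)}\neq0$) while $\deg A_\ell\le n-1$ for $1\le\ell\le r-1$, and by \eqref{Aplus} the $j$-th polynomial has degree $n$ for $j=k$ and $\le n-1$ for $j\ne k$, matching $n_j-1$; the exact value $n-1$ for $j\ne k$ will drop out of the uniqueness at the end. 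For the orthogonality I would, in the $j$-th integral of \eqref{orthor}, substitute $x=\omega^{j-1}t$ with $t\in[0,1]$: because $|\omega^{j-1}t|=t$, $(1-x^r)^\alpha=(1-t^r)^\alpha$ and $\omega^{-j+1}x=t$, the $j$-th term equals $\omega^{(j-1)(m+1)}\omega^{-k+1}\int_0^1 t^{m}A_{(j-k)\bmod r}(t)\,t^\beta(1-t^r)^\alpha\,dt$. Re-indexing by $\ell=(j-k)\bmod r$ and using $\omega^{j-1}=\omega^{(k-1)+\ell}$, the whole sum becomes
\[
\omega^{(k-1)m}\sum_{\ell=0}^{r-1}\omega^{\ell(m+1)}\int_0^1 t^{m}A_\ell(t)\,t^\beta(1-t^r)^\alpha\,dt .
\]
Inserting \eqref{Aell} and interchanging the two finite sums, the inner sum over $\ell$ is $\sum_{\ell=0}^{r-1}\omega^{\ell(m+1+i)}$, which is nonzero only for the unique $i=i_0\in\{0,\dots,r-1\}$ with $i_0\equiv-(m+1)\bmod r$; thus, up to the nonzero factor $\omega^{(k-1)m}r/\bigl(\tau_{n,r}^{(\alpha,\beta)}\nu_n^{(\alpha,\beta-i_0)}\bigr)$, the sum reduces to
\[
\int_0^1 t^{\,m+i_0}\,p_n(t;\alpha,\beta-i_0)\,t^{\,\beta-i_0}(1-t^r)^\alpha\,dt .
\]
Since $m+i_0\equiv-1\bmod r$, write $m+i_0=rq-1$; from $0\le m\le rn-1$ and $0\le i_0\le r-1$ one gets $1\le q\le n$, so this is exactly relation \eqref{modr} of Theorem~\ref{thm31} with the parameter $\beta$ replaced by $\beta-i_0$ (admissible: $\beta-i_0>-r$ keeps the integrand integrable at $0$, and the Gamma-function identity behind \eqref{modr} persists by analytic continuation). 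Hence the integral vanishes and \eqref{orthor} holds.

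The normalization is the same computation at $m=rn$: then $\omega^{(k-1)m}=\omega^{(k-1)rn}=1$, the surviving index is $i_0=r-1$, and $m+i_0=r(n+1)-1$, so the sum equals
\[
\frac{r}{\tau_{n,r}^{(\alpha,\beta)}\nu_n^{(\alpha,\beta-r+1)}}\int_0^1 t^{\,r(n+1)-1}\,p_n(t;\alpha,\beta-r+1)\,t^{\,\beta-r+1}(1-t^r)^\alpha\,dt .
\]
Combining \eqref{modr} with \eqref{norm} in the proof of Theorem~\ref{thm31} yields $\int_0^1 t^{r(n+1)-1}p_n(t;\alpha,\gamma)\,t^{\gamma}(1-t^r)^\alpha\,dt=n!/(rn+r\alpha+\gamma+r)_{n+1}$, so with $\gamma=\beta-r+1$ this integral is $n!/(rn+r\alpha+\beta+1)_{n+1}$, and \eqref{normr} becomes $\tau_{n,r}^{(\alpha,\beta)}=rn!/\bigl(\nu_n^{(\alpha,\beta-r+1)}(rn+r\alpha+\beta+1)_{n+1}\bigr)$; substituting \eqref{nu} together with $\tfrac{\beta-r+1+n}{r}+1=\tfrac{\beta+n+1}{r}$ and $(rn+r\alpha+\beta+1)_{n+1}=\Gamma(rn+n+r\alpha+\beta+2)/\Gamma(rn+r\alpha+\beta+1)$ one checks this is exactly \eqref{tau}. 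Having verified the degree bounds, \eqref{orthor} and \eqref{normr}, normality of $\vec n+\vec e_k$ forces the constructed vector to be the type~I vector, which in particular gives $\deg A_{\vec n+\vec e_k,j}^{(\alpha,\beta)}=n-1$ for $j\ne k$. I expect the only genuinely delicate part to be the modular bookkeeping — tracking $j-k$, $m+1$ and $i$ modulo $r$ so that exactly one term of the discrete Fourier sum survives with its parameter shift $\beta-i_0$ landing on an admissible instance of \eqref{modr} ($1\le q\le n$ for orthogonality, $q=n+1$ for the normalization) — together with the Gamma-function algebra that identifies the resulting constant with $\tau_{n,r}^{(\alpha,\beta)}$ as written.
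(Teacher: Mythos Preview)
Your proposal is correct and follows essentially the same approach as the paper: verify the degree conditions via the leading-coefficient sum $\sum_{i=0}^{r-1}\omega^{\ell i}$, reduce the orthogonality and normalization sums by the roots-of-unity identity to a single integral of the form $\int_0^1 t^{rq-1}p_n(t;\alpha,\beta-i_0)t^{\beta-i_0}(1-t^r)^\alpha\,dt$, and then invoke \eqref{modr} and \eqref{norm}. The only cosmetic difference is that the paper inserts \eqref{Aplus}--\eqref{Aell} into the sum first and then sums over the ray index $j$, whereas you first substitute $x=\omega^{j-1}t$ and re-index by $\ell=(j-k)\bmod r$ before inserting \eqref{Aell}; both orderings collapse to the same single integral, and your version is actually a bit more explicit about the analytic continuation needed for $\beta-i_0$ and about deriving the closed form \eqref{tau}.
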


\begin{proof}
We will first determine the degree of the polynomials $A_\ell$. For $\ell=0$ we see that $\deg A_0 = \deg p_n(x;\alpha,\beta-j) =n$,
which implies that $\deg A_{\vec{n}+\vec{e}_j,j}^{(\alpha,\beta)} = n$ for all $j=1,2,\ldots,r$.
For $\ell=1,2,\ldots,r-1$ the coefficient of $x^n$ on the right hand side of \eqref{Aell} is given by
\[   \sum_{j=0}^{r-1} \omega^{\ell j} = \frac{1-\omega^{r \ell}}{1-\omega^\ell} = 0. \]
Therefore for all $k \neq j$ we have $\deg A_{\vec{n}+\vec{e}_k,j}^{(\alpha,\beta)} < n$ and one can check that it is in fact $n-1$.
For the orthogonality and the normalization we need the following integral to vanish for all $\ell = 0,1,2,\ldots,rn-1$ and to be equal to one for
$\ell=rn$:
\[  \sum_{j=1}^r \int_0^{\omega^{j-1}} x^\ell A_{\vec{n}+\vec{e}_k,j}^{(\alpha,\beta)}(x) |x|^\beta (1-x^r)^\alpha\, dx ,\]
and this expression is equal to
\[  \frac{\omega^{-k+1}}{\tau_{n,r}^{(\alpha,\beta)}} \sum_{m=0}^{r-1} \frac{\omega^{m(-k+1)}}{\nu_{n}^{(\alpha,\beta-m)}} 
     \sum_{j=1}^r (\omega^{\ell+m+1})^{j-1} \int_0^1 p_n(x;\alpha,\beta-m) x^{\ell+\beta} (1-x^r)^{\alpha}\, dx .  \]
The second sum in this expression is
\[     \sum_{j=1}^r (\omega^{\ell+m+1})^{j-1} = \begin{cases} r, & \textrm{if } \ell+m+1 \equiv 0 \bmod r, \\
                                                              0, & \textrm{if } \ell+m+1 \not\equiv 0 \bmod r.
                                                 \end{cases}  \]
Therefore we need to show that 
\[   \int_0^1 p_n(x;\alpha,\beta-m) x^{rj-m-1+\beta} (1-x^r)^{\alpha}\, dx = 0, \qquad 1 \leq j \leq n, \]
and the latter follows from \eqref{modr}. For the normalization we need to show that
\begin{eqnarray*}
   1 &=& \sum_{j=1}^r \int_0^{\omega^{j-1}} x^{rn} A_{\vec{n}+\vec{e}_k,j}^{(\alpha,\beta)}(x) |x|^\beta (1-x^r)^\alpha \, dx \\
     &=& r \frac{\omega^{r(k-1)}}{\tau_{n,r}^{(\alpha,\beta)} \nu_{n}^{(\alpha,\beta-r+1)}} 
      \int_0^1 p_n(x;\alpha,\beta-r+1) x^{rn+\beta} (1-x^r)^\alpha\, dx, 
\end{eqnarray*}
and this follows from the explicit expressions \eqref{tau} and  \eqref{nu} for $\tau_{n,r}^{(\alpha,\beta)}$ and $\nu_n^{(\alpha,\beta)}$ and the expression \eqref{norm} for the integral.
\end{proof}

We also give an explicit expression for the type I Jacobi-Angelesco polynomials below the diagonal, i.e., for $A_{\vec{n}-\vec{e}_k,j}^{(\alpha,\beta)}$,
where $\vec{n}=(n,n,\ldots,n)$.

\begin{theorem}   \label{thm33}
For every $r > 1$, $\alpha,\beta>-1$ and $\vec{n}=(n,n,\ldots,n)$ with $n > 0$ we have
\begin{multline}  \label{Amin}
   \gamma_{n,r}^{(\alpha,\beta)} A_{\vec{n}-\vec{e}_k,j}^{(\alpha,\beta)}(x)
     = \omega^{j-1} \nu_{n-1}^{(\alpha,\beta)} p_{n-1}(\omega^{-j+1}x;\alpha,\beta-1) \\
                    - \omega^{k-1} \nu_{n-1}^{(\alpha,\beta-1)} p_{n-1}(\omega^{-j+1}x;\alpha,\beta) , 
\end{multline}
 where the normalizing constant $\gamma_{n,r}^{(\alpha,\beta)}$ is given by
\begin{equation}  \label{gamma}
    \gamma_{n,r}^{(\alpha,\beta)} = \frac{r(n-1)! \Gamma(n+\alpha+\frac{n+\beta-1}{r})}
                                              {\Gamma(n+\alpha) \Gamma(\frac{n+\beta-1}{r}+1) (rn+r\alpha+\beta-1)_n} .
\end{equation}
\end{theorem}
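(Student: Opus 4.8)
Write $\vec m = \vec n - \vec e_k$, so that $|\vec m| = rn-1$ and condition~(1) for the type~I polynomials requires $\deg A_{\vec m,j}^{(\alpha,\beta)} = n-1$ for $j\neq k$ and $\deg A_{\vec m,k}^{(\alpha,\beta)} = n-2$. The plan is to imitate the proof of Theorem~\ref{thm24} (the near-diagonal case for $r=2$), combined with the rotation argument of Theorems~\ref{thm31} and~\ref{thm32}. First I would read off the degrees from \eqref{Amin}: after the substitution $y=\omega^{-j+1}x$, the coefficient of $x^{n-1}$ on the right equals $(\omega^{-j+1})^{n-1}\nu_{n-1}^{(\alpha,\beta)}\nu_{n-1}^{(\alpha,\beta-1)}(\omega^{j-1}-\omega^{k-1})$, which is nonzero exactly when $j\not\equiv k\pmod r$; hence $\deg A_{\vec m,j}^{(\alpha,\beta)}=n-1$ for $j\neq k$ and $\deg A_{\vec m,k}^{(\alpha,\beta)}\leq n-2$ (for $n=1$ the formula gives $A_{\vec m,k}^{(\alpha,\beta)}\equiv0$, consistent with the empty degree $-1$). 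Since $\vec m$ lies near the diagonal it is a normal index, so I do not need to verify the exact value $n-2$ directly: once the orthogonality and normalization below are checked, uniqueness identifies the right-hand side of \eqref{Amin} with the type~I polynomials and the exact degree follows.

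To prove the orthogonality \eqref{orthor} for $0\leq\ell\leq rn-3$, I would substitute $x=\omega^{j-1}u$ with $u\in[0,1]$ in each of the $r$ integrals. Because $|\omega^{j-1}|=1$ and $\omega^r=1$, the factors $|x|^\beta$ and $(1-x^r)^\alpha$ are unchanged and $p_{n-1}(\omega^{-j+1}x;\cdot)$ becomes $p_{n-1}(u;\cdot)$; collecting the powers of $\omega^{j-1}$, the sum over $j$ produces the discrete orthogonality sums $\sum_{j=1}^r(\omega^{j-1})^{\ell+2}$ (from the first term of \eqref{Amin}, whose coefficient carries an extra $\omega^{j-1}$) and $\sum_{j=1}^r(\omega^{j-1})^{\ell+1}$ (from the second term), each equal to $r$ or $0$ according to divisibility by $r$, exactly as in the proofs of Theorems~\ref{thm31} and~\ref{thm32}. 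Hence, up to the constant $\gamma_{n,r}^{(\alpha,\beta)}$, the left-hand side of \eqref{orthor} is a multiple of $\int_0^1 u^{r\lambda-1}p_{n-1}(u;\alpha,\beta-1)\,u^{\beta-1}(1-u^r)^\alpha\,du$ when $\ell=r\lambda-2$, a multiple of $\int_0^1 u^{r\lambda-1}p_{n-1}(u;\alpha,\beta)\,u^{\beta}(1-u^r)^\alpha\,du$ when $\ell=r\lambda-1$, and $0$ otherwise. The first integral is an instance of \eqref{modr} with parameters $(\alpha,\beta-1)$ and degree $n-1$, the second with parameters $(\alpha,\beta)$ and degree $n-1$, and a short check of the constraint $0\leq\ell\leq rn-3$ shows that in both cases $1\leq\lambda\leq n-1$, which is precisely the range where \eqref{modr} applies; so every such integral vanishes.

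For the normalization \eqref{normr} one has $\ell=|\vec m|-1=rn-2\equiv-2\pmod r$, so only the first term of \eqref{Amin} survives (the second would require $r\mid\ell+1$, impossible since $r>1$), and $\gamma_{n,r}^{(\alpha,\beta)}$ times the left-hand side of \eqref{normr} equals $r\,\nu_{n-1}^{(\alpha,\beta)}\int_0^1 u^{rn-1}p_{n-1}(u;\alpha,\beta-1)\,u^{\beta-1}(1-u^r)^\alpha\,du$. This integral is computed exactly as the normalization integral in the proof of Theorem~\ref{thm31}: expanding the powers of $(1-u^r)$ in the auxiliary quantities $S_\lambda$ and using \eqref{modr} together with \eqref{lem23b} --- equivalently, applying \eqref{norm} with $n$ replaced by $n-1$ and $\beta$ by $\beta-1$ --- gives the value $\frac{(n-1)!}{(rn+r\alpha+\beta-1)_n}$. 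Inserting the explicit expression \eqref{nu} for $\nu_{n-1}^{(\alpha,\beta)}$ then turns $r\,\nu_{n-1}^{(\alpha,\beta)}\frac{(n-1)!}{(rn+r\alpha+\beta-1)_n}$ into exactly the constant $\gamma_{n,r}^{(\alpha,\beta)}$ of \eqref{gamma}. Together with the normality of $\vec m$ this proves the theorem.

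I expect the only real difficulties to be organizational: keeping the residue classes $\ell\equiv-2$ and $\ell\equiv-1\pmod r$ apart, matching the resulting ranges of $\lambda$ with those in \eqref{modr}, and carrying out the routine but slightly lengthy $\Gamma$-function and Pochhammer-symbol simplification that produces the stated closed form of $\gamma_{n,r}^{(\alpha,\beta)}$. Everything else is a direct transcription of the $r=2$ argument.
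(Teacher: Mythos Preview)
Your proposal is correct and follows essentially the same approach as the paper's own proof: reduce the orthogonality sum via the substitution $x=\omega^{j-1}u$ to the root-of-unity sums $\sum_{j}(\omega^{j-1})^{\ell+2}$ and $\sum_{j}(\omega^{j-1})^{\ell+1}$, invoke \eqref{modr} (with parameters $(\alpha,\beta-1)$ and $(\alpha,\beta)$ respectively) in the two surviving residue classes, and then identify the normalization integral with the one computed in Theorem~\ref{thm31} for $n-1$ and $\beta-1$. The only cosmetic difference is that you appeal to normality of $\vec m$ to avoid checking $\deg A_{\vec m,k}^{(\alpha,\beta)}=n-2$ directly, whereas the paper simply asserts this degree; both are fine.
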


\begin{proof}
Observe that for $j\neq k$ the degree of $A_{\vec{n}-\vec{e}_k,j}^{(\alpha,\beta)}$ is $n-1$ but that for $j=k$ the leading term
$x^{n-1}$ vanishes and the degree is $n-2$. For the orthogonality relations we need to verify that the following integral vanishes
for $0 \leq \ell \leq rn-3$:
\[ \sum_{j=1}^r \int_0^{\omega^{j-1}} x^\ell A_{\vec{n}-\vec{e}_k,j}^{(\alpha,\beta)}(x) |x|^\beta (1-x^r)^\alpha\, dx  , \]
and this expression is equal to
\begin{multline*}
   \frac{1}{\gamma_{n,r}^{(\alpha,\beta)}} \int_0^1 x^{\ell+\beta} (1-x^r)^\alpha \Bigr( \nu_{n-1}^{(\alpha,\beta)}
   \sum_{j=1}^r (\omega^{j-1})^{\ell+2} p_{n-1}(x;\alpha,\beta-1) \\
     -\ \omega^{k-1} \nu_{n-1}^{(\alpha,\beta-1)} \sum_{j=1}^r (\omega^{j-1})^{\ell+1} p_{n-1}(x;\alpha,\beta) \Bigr)\, dx .  
\end{multline*}
The two sums involving the roots of unity $\omega$ are
\[   \sum_{j=1}^r (\omega^{\ell+2})^{j-1} = \begin{cases}   r, & \textrm{if } \ell+2 \equiv 0 \bmod r, \\
                                                            0, & \textrm{if } \ell+2 \not\equiv 0 \bmod r, 
                                            \end{cases}  \]
and
\[   \sum_{j=1}^r (\omega^{\ell+1})^{j-1} = \begin{cases}   r, & \textrm{if } \ell+1 \equiv 0 \bmod r, \\
                                                            0, & \textrm{if } \ell+1 \not\equiv 0 \bmod r, 
                                            \end{cases}  \]
so the integral vanishes whenever $\ell+2 \not\equiv 0 \bmod r$ and $\ell+1 \not\equiv 0 \bmod r$.
In case $\ell=rj-2$ for $1 \leq j \leq n-1$ we need to verify
\[    \int_0^1 x^{rj-2+\beta} (1-x^r)^\alpha p_{n-1}(x;\alpha,\beta-1)\,dx = 0, \]
and this holds because of \eqref{modr}. In case $\ell=rj-1$ for $1 \leq j \leq n-1$ we need
\[  \int_0^1 x^{rj-1+\beta} (1-x^r)^\alpha p_{n-1}(x;\alpha,\beta)\,dx = 0, \]
which again follows from \eqref{modr}. For the normalization we need to show that
\begin{eqnarray*}  
   1 &=&  \sum_{j=1}^r \int_0^{\omega^{j-1}} x^{rn-2} A_{\vec{n}-\vec{e}_k,j}^{(\alpha,\beta)}(x) |x|^\beta (1-x^r)^\alpha\, dx \\
     &=& \frac{r}{\gamma_{n,r}^{(\alpha,\beta)}} \nu_{n-1}^{(\alpha,\beta)} \int_0^1 x^{rn+\beta-2} (1-x^r)^\alpha p_{n-1}(x;\alpha,\beta-1)\, dx ,
\end{eqnarray*}
and this follows by using the normalization given in Theorem \ref{thm31}.
\end{proof}

\subsection{Recurrence relation}
For general $r$ the nearest neighbor recurrence relations for the type I multiple orthogonal polynomials are 
\[ xQ_{\vec{n}}(x) = Q_{\vec{n}-\vec{e}_k}(x) + b_{\vec{n}-\vec{e}_k,k} Q_{\vec{n}}(x) + \sum_{\ell=1}^r a_{\vec{n},\ell} Q_{\vec{n}+\vec{e}_\ell}(x), \]
for $1 \leq k \leq r$, where $Q_{\vec{n}}(x) = \sum_{j=1}^r A_{\vec{n},j}(x) \chi_{[0,\omega^{j-1}]}(x)$. 
This relation can also be stated in terms of the individual polynomials on each part of the $r$-star and one has for every $j=1,2,\ldots,r$
 \[ xA_{\vec{n},j}(x) = A_{\vec{n}-\vec{e}_k,j}(x) + b_{\vec{n}-\vec{e}_k,k} A_{\vec{n},j}(x) 
+ \sum_{\ell=1}^r a_{\vec{n},\ell} A_{\vec{n}+\vec{e}_\ell,j}(x), \]
for $1 \leq k \leq r$. For the type II multiple orthogonal polynomials the recurrence relations are
\[  xP_{\vec{n}}(x) = P_{\vec{n}+\vec{e}_k}(x) + b_{\vec{n},k} P_{\vec{n}}(x) + \sum_{\ell=1}^r a_{\vec{n},\ell} P_{\vec{n}-\vec{e}_\ell}(x), \]
for $1 \leq k \leq r$. An explicit expression for the recurrence coefficients for the Jacobi-Angelesco polynomials near the diagonal is given by

\begin{proposition}  \label{prop34}
Let $\vec{n}=(n,n,\ldots,n)$ be a diagonal multi-index for $r \geq 1$. Then
\[    a_{\vec{n},k} = a_{n,r}^{(\alpha,\beta)} \omega^{2(k-1)}, \qquad 1 \leq k \leq r, \]
where
\[  a_{n,r}^{(\alpha,\beta)} = \frac{n(n+\alpha)(rn+r\alpha+\beta)}{r(rn+n+r\alpha+\beta)(rn+n+r\alpha +\beta + 1)}
    \frac{\Gamma(\frac{\beta+n+1}{r}) \Gamma(n+\alpha+\frac{\beta+n-1}{r})}{\Gamma(\frac{\beta+n-1}{r}+1)
     \Gamma(n+\alpha+\frac{\beta+n+1}{r})} .  \]
Furthermore for $r >1$
\[   b_{\vec{n}-\vec{e}_k,k} = b_{n,r}^{(\alpha,\beta)} \omega^{k-1}, \qquad 1 \leq k \leq r, \]
where
\[  b_{n,r}^{(\alpha,\beta)} = \frac{(n+\alpha+\frac{\beta-1}{r}) \Gamma(n+\alpha+\frac{n+\beta-2}{r})
                                            \Gamma(\frac{n+\beta-1}{r}+1)}
                      {(n+\alpha+\frac{n+\beta-1}{r}) \Gamma(n+\alpha+\frac{n+\beta-1}{r})
                                            \Gamma(\frac{n+\beta-2}{r}+1)}.  \] 
\end{proposition}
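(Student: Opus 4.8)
The plan is to extract both recurrence coefficients by matching the coefficients of the top powers of $x$ in the nearest neighbor recurrence relation
\[
  xA_{\vec{n},j}(x) = A_{\vec{n}-\vec{e}_k,j}(x) + b_{\vec{n}-\vec{e}_k,k}A_{\vec{n},j}(x) + \sum_{\ell=1}^r a_{\vec{n},\ell}A_{\vec{n}+\vec{e}_\ell,j}(x),
\]
after inserting the closed forms for $A_{\vec{n},j}^{(\alpha,\beta)}$, $A_{\vec{n}+\vec{e}_\ell,j}^{(\alpha,\beta)}$ and $A_{\vec{n}-\vec{e}_k,j}^{(\alpha,\beta)}$ from Theorems \ref{thm31}, \ref{thm32} and \ref{thm33}. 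Since all three families are built from the polynomials \eqref{pnabr} evaluated at $\omega^{-j+1}x$ together with explicit powers of $\omega$, matching the coefficient of a fixed power of $x$ produces the recurrence coefficients already in the form $a_{\vec{n},\ell}=a_{n,r}^{(\alpha,\beta)}\omega^{2(\ell-1)}$ and $b_{\vec{n}-\vec{e}_k,k}=b_{n,r}^{(\alpha,\beta)}\omega^{k-1}$; as the recurrence coefficients near the diagonal are uniquely determined, it then suffices to pin down the two scalars $a_{n,r}^{(\alpha,\beta)}$ and $b_{n,r}^{(\alpha,\beta)}$.

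For $a_{n,r}^{(\alpha,\beta)}$ I would compare coefficients of $x^n$ in the relation with a fixed $k$ and $j=\ell$. By the degree statements in Theorems \ref{thm31}--\ref{thm33} the only term of degree $n$ on the right is $a_{\vec{n},\ell}A_{\vec{n}+\vec{e}_\ell,\ell}^{(\alpha,\beta)}$, and since $A_0$ in \eqref{Aell} has leading coefficient $r/\tau_{n,r}^{(\alpha,\beta)}$ (each of its $r$ summands becoming monic after division by $\nu_n^{(\alpha,\beta-j)}$), comparing with the leading coefficient $\lambda_{n,r}^{(\alpha,\beta)}\nu_{n-1}^{(\alpha,\beta)}\omega^{(-\ell+1)(n-1)}$ of $A_{\vec{n},\ell}^{(\alpha,\beta)}$ on the left gives $a_{n,r}^{(\alpha,\beta)}=\lambda_{n,r}^{(\alpha,\beta)}\nu_{n-1}^{(\alpha,\beta)}\tau_{n,r}^{(\alpha,\beta)}/r$. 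Substituting \eqref{nu}, \eqref{tau} and the value of $\lambda_{n,r}^{(\alpha,\beta)}$ from Theorem \ref{thm31} and cancelling the Gamma factors yields the stated formula; the case $r=1$ is included here.

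For $b_{n,r}^{(\alpha,\beta)}$ (with $r>1$) I would take $j=k$ and compare coefficients of $x^{n-1}$. Now $A_{\vec{n}-\vec{e}_k,k}^{(\alpha,\beta)}$ has degree $n-2$ (its $x^{n-1}$ term cancels because $j=k$, by Theorem \ref{thm33}), so it drops out; the surviving terms are the $x^{n-2}$-coefficient of $A_{\vec{n},k}^{(\alpha,\beta)}$ on the left, and on the right the product of $b_{\vec{n}-\vec{e}_k,k}$ with the leading coefficient of $A_{\vec{n},k}^{(\alpha,\beta)}$, the product of $a_{\vec{n},k}$ with the $x^{n-1}$-coefficient of $A_{\vec{n}+\vec{e}_k,k}^{(\alpha,\beta)}$ (that is, of $A_0$), and the products of $a_{\vec{n},\ell}$ for $\ell\neq k$ with the leading coefficients of the degree-$(n-1)$ polynomials $A_{\vec{n}+\vec{e}_\ell,k}^{(\alpha,\beta)}$ (that is, of $A_{k-\ell\bmod r}$). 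Writing $d_j$ for the $x^{n-1}$-coefficient of $p_n(x;\alpha,\beta-j)$ divided by $\nu_n^{(\alpha,\beta-j)}$, formula \eqref{Aell} gives the $x^{n-1}$-coefficient of $A_m$ as $\bigl(\sum_{j=0}^{r-1}\omega^{mj}d_j\bigr)/\tau_{n,r}^{(\alpha,\beta)}$. Inserting $a_{\vec{n},\ell}=a_{n,r}^{(\alpha,\beta)}\omega^{2(\ell-1)}$, re-indexing by $m=k-\ell\bmod r$, and summing over $m$ with the roots-of-unity identity ($\sum_{m=1}^{r-1}\omega^{m(j-1)}=r-1$ if $j=1$ and $=-1$ otherwise), the $\ell\neq k$ contribution collapses to $\bigl(rd_1-\sum_{j=0}^{r-1}d_j\bigr)/\tau_{n,r}^{(\alpha,\beta)}$, the $\sum_j d_j$ terms cancel against the $A_0$ term, and since $ra_{n,r}^{(\alpha,\beta)}/\bigl(\tau_{n,r}^{(\alpha,\beta)}\lambda_{n,r}^{(\alpha,\beta)}\nu_{n-1}^{(\alpha,\beta)}\bigr)=1$ the identity reduces to
\[
  b_{n,r}^{(\alpha,\beta)}=\frac{\rho_{n-1}^{(\alpha,\beta)}}{\nu_{n-1}^{(\alpha,\beta)}}-d_1 ,
\]
where $\rho_{n-1}^{(\alpha,\beta)}$ is the $x^{n-2}$-coefficient of $p_{n-1}(x;\alpha,\beta)$. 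Both $d_1$ and $\rho_{n-1}^{(\alpha,\beta)}/\nu_{n-1}^{(\alpha,\beta)}$ are explicit quotients of Gamma values coming from \eqref{pnabr}, and combining the two over a common denominator (as in the proof of Theorem \ref{thm31}) produces the claimed expression for $b_{n,r}^{(\alpha,\beta)}$.

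The main obstacle is this last computation: one has to extract the two top coefficients of the linear combinations $A_\ell$ of Theorem \ref{thm32} while carefully keeping track of all the phases arising from the dilation $x\mapsto\omega^{-j+1}x$ and the explicit factors $\omega^{-k+1}$, and then notice that the apparently complicated double sum over $\ell$ and $j$ telescopes through a roots-of-unity identity, so that $b_{n,r}^{(\alpha,\beta)}$ in the end depends only on $d_1$ and on the single coefficient ratio $\rho_{n-1}^{(\alpha,\beta)}/\nu_{n-1}^{(\alpha,\beta)}$; once this collapse is seen, the remaining work is routine manipulation of Pochhammer symbols. A convenient consistency check is that for $r=2$ the proposition must specialize to the recurrence coefficients found in Section \ref{secI}.
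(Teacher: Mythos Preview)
Your approach is essentially the same as the paper's: both extract $a_{\vec{n},k}$ as the ratio of leading coefficients $\kappa_{\vec{n},k}/\kappa_{\vec{n}+\vec{e}_k,k}$ (which is exactly your ``coefficient of $x^n$'' match), and both obtain $b_{\vec{n}-\vec{e}_k,k}$ by comparing the next-to-leading coefficients in the recurrence at $j=k$, using the explicit forms from Theorems~\ref{thm31}--\ref{thm33}. Your description of the roots-of-unity collapse that turns the $\sum_{\ell\neq k}$ contribution into $rd_1-\sum_j d_j$ (so that only $d_1$ survives and $b_{n,r}^{(\alpha,\beta)}=\rho_{n-1}^{(\alpha,\beta)}/\nu_{n-1}^{(\alpha,\beta)}-d_1$) is precisely the computation the paper summarizes with ``Combining all these results gives the desired expression''; you simply spell it out in more detail.
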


\begin{proof}
If we write
\[    A_{\vec{n},k}(x) = \kappa_{\vec{n},k} x^{n-1} + \delta_{\vec{n},k} x^{n-2} + \cdots, \]
then
\[   a_{\vec{n},k} = \frac{\kappa_{\vec{n},k}}{\kappa_{\vec{n}+\vec{e}_k,k}}, \]
and this ratio can be evaluated by using  Theorem \ref{thm31} which gives
\[    \kappa_{\vec{n},k} = \frac{1}{r} \frac{(rn+r\alpha+\beta)_n}{(n-1)!} 
                          \frac{\Gamma(n+\alpha+\frac{\beta+n-1}{r})}{\Gamma(n+\alpha)\Gamma(\frac{\beta+n-1}{r}+1)}
                            \omega^{(-k+1)(n-1)}, \]
and Theorem \ref{thm32} which gives
\[   \kappa_{\vec{n}+\vec{e}_k,k} = \frac{(rn+r\alpha+\beta+1)_{n+1}}{n!} 
                      \frac{\Gamma(n+\alpha+\frac{\beta+n+1}{r})}{\Gamma(n+\alpha+1)\Gamma(\frac{\beta+n+1}{r})}
                  \omega^{(-k+1)(n+1)}.  \]
The coefficients $b_{\vec{n}-\vec{e}_k,k}$ can be computed in a similar way, but the computations are a bit longer and only the case $r>1$ is covered. For $r=1$ the computations are slightly different but in that case the result is known because this corresponds to Jacobi polynomials on $[0,1]$.
The expression for $b_{\vec{n}-\vec{e}_k,k}$ is
\[   b_{\vec{n}-\vec{e}_k,k} = \frac{\delta_{\vec{n},k}}{\kappa_{\vec{n},k}} - \frac{\delta_{\vec{n}+\vec{e}_k,k}}{\kappa_{\vec{n}+\vec{e}_k,k}}
    - \sum_{\ell=1,\ell\neq k}^r \frac{\kappa_{\vec{n},\ell}}{\kappa_{\vec{n},k}} \frac{\kappa_{\vec{n}+\vec{e}_\ell,k}}{\kappa_{\vec{n}+\vec{e}_\ell,\ell}}.   \]
From Theorem \ref{thm31} one finds
\[   \delta_{\vec{n},k} = - \frac{(rn+r\alpha+\beta)_n\Gamma(n+\alpha+\frac{n+\beta-2}{r})}
                                 {r(n-2)!\Gamma(n+\alpha)\Gamma(\frac{n+\beta-2}{r}+1)}  
                                 \omega^{(-k+1)(n-2)}, \qquad n\geq 2 ,  \]
and from Theorem \ref{thm32}
\[         \delta_{\vec{n}+\vec{e}_k,k} = - \frac{n\omega^{(-k+1)n}}{\tau_{n,r}^{(\alpha,\beta)}}
           \sum_{j=0}^{r-1} \frac{\Gamma(\frac{n+\beta-j}{r}+1)\Gamma(n+\alpha+\frac{n+\beta-1-j}{r}+1)}
                                 {\Gamma(\frac{n+\beta-1-j}{r}+1)\Gamma(n+\alpha+\frac{n+\beta-j}{r}+1)}, \]      
and for $\ell\neq k$
\[ \kappa_{\vec{n}+\vec{e}_\ell,k} = - \frac{n \omega^{-\ell+1-(k-1)(n-1)}}{\tau_{n,r}^{(\alpha,\beta)}}
         \sum_{j=0}^{r-1} \omega^{(k-\ell)j} \frac{\Gamma(\frac{n+\beta-j}{r}+1)\Gamma(n+\alpha+\frac{n+\beta-1-j}{r}+1)}
                                 {\Gamma(\frac{n+\beta-1-j}{r}+1)\Gamma(n+\alpha+\frac{n+\beta-j}{r}+1)}.  \]
Combining all these results gives the desired expression for $b_{\vec{n}-\vec{e}_k,k}$.
\end{proof}

Observe that one can easily find the asymptotic behavior of these recurrence coefficients as $n\to \infty$ by using \cite[5.11.12]{NIST}
\[   \frac{\Gamma(n+a)}{\Gamma(n+b)} \sim n^{a-b}, \qquad n \to \infty, \]
which gives
\[   \lim_{n \to \infty} a_{n,r}^{(\alpha,\beta)} = \frac{r}{(r+1)^{2+2/r}}, \quad
     \lim_{n \to \infty} b_{n,r}^{(\alpha,\beta)} = \frac{r}{(r+1)^{1+1/r}}.  \]
The limit for $b_{n,r}^{(\alpha,\beta)}$ is valid for $r>1$.   

\subsection{Differential equation}
In this section we will give a linear differential equation of order $r+1$ for the polynomial $p_n(x;\alpha,\beta)$ given in \eqref{pnabr}.
The differential equation is a combination of lowering and raising operators for these polynomials, i.e., differential operators
that lower or raise the degree of the polynomial and raise/lower the parameters $\alpha$ and $\beta$.

\begin{lemma}  \label{lem35}
For the polynomial $p_n(x;\alpha,\beta)$ given in \eqref{pnabr} one has for $\alpha,\beta>-1$ the lowering property
\begin{equation} \label{pnabrlow}
    p_n'(x;\alpha,\beta) = np_{n-1}(x;\alpha+1,\beta+1) ,
\end{equation}
and for $\alpha,\beta > r-1$ the raising property
\begin{equation}   \label{pnabrrais}
   \bigl( x^\beta(1-x^r)^\alpha p_n(x;\alpha,\beta) \bigr)' = x^{\beta-1}(1-x^r)^{\alpha-1} 
\sum_{k=1}^r a_{k,n}^{(\alpha,\beta)} x^{r-k} p_{n+k}(x;\alpha-k,\beta-k),
\end{equation}  
where
\[   a_{k,n}^{(\alpha,\beta)} = (-1)^k \left[ \binom{r}{k} (r\alpha+\beta) + \binom{r+1}{k+1} kn \right] .  \]
\end{lemma}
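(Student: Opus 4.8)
The lowering property \eqref{pnabrlow} is purely formal: differentiating the explicit sum \eqref{pnabr} term by term, the coefficient of $x^{k-1}$ in $p_n'(x;\alpha,\beta)$ is $k\binom{n}{k}\frac{\Gamma(n+\alpha+\frac{\beta+k}{r}+1)}{\Gamma(n+\alpha+1)\Gamma(\frac{\beta+k}{r}+1)}(-1)^{n-k}$. Shifting the summation index to $m=k-1$ and using $k\binom{n}{k}=n\binom{n-1}{m}$ together with the identity $\frac{\Gamma(n+\alpha+\frac{\beta+m+1}{r}+1)}{\Gamma(n+\alpha+1)\Gamma(\frac{\beta+m+1}{r}+1)} = \frac{\Gamma((n-1)+(\alpha+1)+\frac{(\beta+1)+m}{r}+1)}{\Gamma((n-1)+(\alpha+1)+1)\Gamma(\frac{(\beta+1)+m}{r}+1)}$, one recognizes exactly $n\,p_{n-1}(x;\alpha+1,\beta+1)$. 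I would present this as a one-line change of variables.

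For the raising property \eqref{pnabrrais}, the plan is to proceed exactly as in the proof of Theorem \ref{thm:diff2} (the $r=2$ case, equations \eqref{wpndif}--\eqref{pi2}). First, compute the left-hand side directly by the product rule:
\[
\bigl(x^\beta(1-x^r)^\alpha p_n(x;\alpha,\beta)\bigr)' = x^{\beta-1}(1-x^r)^{\alpha-1}\,\Pi(x),
\]
where $\Pi(x) = \bigl(\beta(1-x^r)-r\alpha x^r\bigr)p_n(x;\alpha,\beta) + x(1-x^r)p_n'(x;\alpha,\beta)$ is a polynomial of degree $n+r$. The claim is that $\Pi(x)=\sum_{k=1}^r a_{k,n}^{(\alpha,\beta)}x^{r-k}p_{n+k}(x;\alpha-k,\beta-k)$. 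To prove this, observe (for $\alpha,\beta>r-1$) via integration by parts and the orthogonality \eqref{modr} that for $1\le j\le n+1$,
\[
\int_0^1 x^{\beta-2}(1-x^r)^{\alpha-1}\Pi(x)\,x^{rj-1}\,dx = \int_0^1\bigl(x^\beta(1-x^r)^\alpha p_n(x;\alpha,\beta)\bigr)' x^{rj-r}\,dx = 0,
\]
since the boundary terms vanish and $\frac{d}{dx}x^{rj-r}$ is a multiple of $x^{rj-r-1}$, which pairs with $p_n$ against the weight $x^\beta(1-x^r)^\alpha$ and is killed by \eqref{modr}. Thus $\Pi$ is a polynomial of degree $n+r$ satisfying $n+1$ linear orthogonality conditions, so it lies in an $r$-dimensional space; and the $r$ polynomials $x^{r-k}p_{n+k}(x;\alpha-k,\beta-k)$, $k=1,\dots,r$, each satisfy those same conditions (again by \eqref{modr}, shifted) and are linearly independent (their degrees $n+r$ down to $n+1$, after accounting for the $x^{r-k}$ prefactor — actually each has degree exactly $n+r$, but their lowest-degree terms differ, or one checks a Vandermonde-type independence). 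Hence $\Pi$ is a linear combination of them, and the coefficients $a_{k,n}^{(\alpha,\beta)}$ are pinned down by matching $r$ coefficients of $\Pi$.

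The main obstacle is the explicit evaluation of the coefficients $a_{k,n}^{(\alpha,\beta)} = (-1)^k\bigl[\binom{r}{k}(r\alpha+\beta)+\binom{r+1}{k+1}kn\bigr]$. I expect the cleanest route is to compare the top $r$ coefficients of $\Pi$ directly from its defining formula $\Pi(x)=\bigl(\beta(1-x^r)-r\alpha x^r\bigr)p_n + x(1-x^r)p_n'$: writing $p_n(x;\alpha,\beta)=\sum c_k x^k$ with $c_n=\nu_n^{(\alpha,\beta)}$, the coefficient of $x^{n+r-k}$ in $\Pi$ involves only $c_n,c_{n-1},\dots,c_{n-k+1}$ and known ratios $c_{n-i}/c_n$, while the coefficient of $x^{n+r-k}$ on the right-hand side involves the leading coefficients of $p_{n+1},\dots,p_{n+k}$ (with shifted parameters) and lower coefficients thereof. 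Solving this triangular system for $a_{1,n},\dots,a_{r,n}$ and simplifying the resulting Gamma-ratios to the stated binomial form is the technical heart; since the paper says it only outlines modifications from $r=2$, I would verify the pattern against \eqref{pi2} (where $r=2$ gives $a_{1,n}^{(\alpha,\beta)}=-(2\alpha+\beta)-3n=-(3n+2\alpha+\beta)$ matching $-(3n+4\alpha+2\beta)$? — here one must be careful that \eqref{pi2} is written in the $r=2$ normalization $p_n(\cdot;\alpha,\beta)$ from \eqref{pnab}; the formula $a_{k,n}^{(\alpha,\beta)}$ with $r=2$, $k=1$ gives $-[2(r\alpha+\beta)+3n]=-[2\cdot 2\alpha+2\beta+3n]$, consistent with $-(3n+4\alpha+2\beta)$, and $k=2$ gives $+[(r\alpha+\beta)+\binom{3}{3}2n]=2\alpha+\beta+2n=2n+2\alpha+\beta$, consistent with the $p_{n+2}$ coefficient) — and then present the general computation as a routine but careful coefficient comparison, relegating the Gamma-function bookkeeping to a remark or leaving it to the reader as the paper's style permits.
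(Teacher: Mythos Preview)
Your approach is essentially the paper's: differentiate \eqref{pnabr} for the lowering property, then write the derivative of the weighted polynomial as $x^{\beta-1}(1-x^r)^{\alpha-1}\pi_{n+r}(x)$, use integration by parts plus \eqref{modr} to get $n+1$ orthogonality conditions, and expand $\pi_{n+r}$ in the basis $x^{r-k}p_{n+k}(x;\alpha-k,\beta-k)$.

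Two points to clean up. First, in your integration-by-parts display the weight should be $x^{\beta-r}$, not $x^{\beta-2}$; you carried over the $r=2$ exponent. Second, and more substantively, the paper pins down the $a_{k,n}^{(\alpha,\beta)}$ by comparing the coefficients of $x^{r-k}$ (the \emph{bottom} $r$ coefficients), not the top ones. This is precisely where your own observation that ``their lowest-degree terms differ'' pays off: since $x^{r-k}p_{n+k}(x;\alpha-k,\beta-k)$ has lowest-degree term $x^{r-k}$, the resulting linear system for $a_1,\ldots,a_r$ is genuinely triangular and each $a_{k,n}$ drops out from a single Gamma-ratio identity. By contrast, comparing the top $r$ coefficients is \emph{not} triangular---all $r$ polynomials share the same leading coefficient $\nu_n^{(\alpha,\beta)}$, and the coefficient of $x^{n+r-i}$ in $x^{r-k}p_{n+k}(x;\alpha-k,\beta-k)$ depends on $k$ only through $\binom{n+k}{i}$, giving a Vandermonde-type system. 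That route still works in principle, but it is messier and does not deliver the closed form as directly. Switch to the bottom coefficients and the computation becomes the routine check you were hoping for.
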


\begin{proof}
The lowering property \eqref{pnabrlow} can easily be proved by differentiating the expression \eqref{pnabr}.
To prove \eqref{pnabrrais} we first observe that
\begin{equation}   \label{raisPi}
  \bigl[ x^\beta(1-x^r)^\alpha p_n(x;\alpha,\beta) \bigr]' = x^{\beta-1}(1-x^r)^{\alpha-1}  \pi_{n+r}(x), 
\end{equation}
where $\pi_{n+r}$ is a polynomial of degree $n+r$ given by
\begin{equation}   \label{Pinr}
  \pi_{n+r}(x) = \bigl( \beta(1-x^r)-\alpha r x^r \bigr) p_n(x;\alpha,\beta) + x(1-x^r) p_n'(x;\alpha,\beta).
\end{equation}
Integrating by parts shows that for $\alpha,\beta >0$
\begin{eqnarray*}
   \int_0^1 x^{\beta-r}(1-x^r)^{\alpha-1} \pi_{n+r}(x) x^{r(j+1)-1}\, dx
    &=& \int_0^1  \bigl[ x^\beta(1-x^r)^\alpha p_n(x;\alpha,\beta) \bigr]' x^{rj}\, dx \\
    &=& - rj \int_0^1 x^\beta (1-x^r)^\alpha p_n(x;\alpha,\beta) x^{rj-1}\, dx ,
\end{eqnarray*}
and by \eqref{modr} this is zero for $1 \leq j \leq n$ but also for $j=0$. So $\pi_{n+r}$ is a polynomial of degree $n+r$
which is orthogonal to all $x^{r\ell-1}$ for $1\leq \ell \leq n+1$ with weight $x^{\beta-r}(1-x^r)^{\alpha-1}$ on the interval $[0,1]$.
These are $n+1$ orthogonality conditions. For $\alpha,\beta > r-1$ the $r$ polynomials $x^{r-k}p_{n+k}(x;\alpha-k,\beta-k)$, $1 \leq k \leq r$, have the same orthogonality
conditions, they are all of degree $n+r$ and they are linearly independent, hence they span the linear space of polynomials of degree $n+r$
with the $n+1$ orthogonality conditions. Therefore
\begin{equation}   \label{pinrsum}
   \pi_{n+r}(x) = \sum_{k=1}^r a_{k,n}^{(\alpha,\beta)} x^{r-k} p_{n+k}(x;\alpha-k,\beta-k), 
\end{equation}
for some $a_{k,n}^{(\alpha,\beta)}$, $k=1,2,\ldots,r$. To find these coefficients, one compares the coefficients of $x^{r-k}$ in the 
latter expansion and \eqref{Pinr}.
\end{proof}

With these operators one can find the differential equation.
\begin{theorem}  \label{thm36}
For any $n \in \mathbb{N}$, $r\geq 1$ and $\alpha,\beta >-1$ the polynomial $y=p_n(x;\alpha,\beta)$ satisfies the differential equation
\begin{equation}  \label{diffr}
  x(1-x^r)y^{(r+1)} + (r+\beta) y^{(r)} +\sum_{k=0}^{r} c_{k,n}^{(\alpha,\beta)} x^k y^{(k)} = 0, 
\end{equation}
where the coefficients $c_{k,n}^{(\alpha,\beta)}$ for $0 \leq k \leq r$ are given by
\begin{equation}   \label{ckn}
   c_{k,n}^{(\alpha,\beta)} = (-1)^{r+k+1} (n-r+1)_{r-k} \left[ \binom{r}{k}(r\alpha+\beta)+ \binom{r+1}{k} (rn+r-kn) \right]. 
\end{equation}
\end{theorem}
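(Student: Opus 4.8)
The plan is to mimic the $r=2$ argument from Theorem \ref{thm:diff2}, combining the lowering operator \eqref{pnabrlow} and the raising operator \eqref{pnabrrais} from Lemma \ref{lem35}. First I would iterate \eqref{pnabrlow} to get $p_n^{(k)}(x;\alpha,\beta) = (n-k+1)_k\, p_{n-k}(x;\alpha+k,\beta+k)$ for $0\le k\le r$; in particular $y^{(r)} = (n-r+1)_r\, p_{n-r}(x;\alpha+r,\beta+r)$. Then apply the raising property \eqref{pnabrrais} with parameters $(\alpha+r,\beta+r)$ and index $n-r$: multiply $p_{n-r}(x;\alpha+r,\beta+r)$ by the weight $x^{\beta+r}(1-x^r)^{\alpha+r}$, differentiate, and use Lemma \ref{lem35} to express the result as $x^{\beta+r-1}(1-x^r)^{\alpha+r-1}\sum_{j=1}^r a_{j,n-r}^{(\alpha+r,\beta+r)} x^{r-j} p_{n-r+j}(x;\alpha+r-j,\beta+r-j)$. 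The key point is that every polynomial on the right-hand side is again a derivative of $y$: by \eqref{pnabrlow}, $p_{n-r+j}(x;\alpha+r-j,\beta+r-j) = p_{n-(r-j)}(x;\alpha+(r-j),\beta+(r-j))$ is a constant multiple of $y^{(r-j)}$, namely $y^{(r-j)}/(n-r+j+1)_{r-j}$. Writing $m=r-j$, the sum runs over $0\le m\le r-1$.

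Next I would carry out the differentiation of the left-hand side of \eqref{pnabrrais} (at parameters $(\alpha+r,\beta+r)$, index $n-r$) by the product rule \emph{without} pulling out the weight, getting
\[
  x^{\beta+r}(1-x^r)^{\alpha+r}\Bigl[ \bigl((\beta+r) x^{-1} - (\alpha+r) r x^{r-1}(1-x^r)^{-1}\bigr) p_{n-r} + p_{n-r}' \Bigr],
\]
and then re-express $p_{n-r}$ and $p_{n-r}'$ in terms of $y^{(r)}$ and $y^{(r+1)}$ via the iterated lowering relation. Multiplying through by $x(1-x^r)$ to clear denominators turns the left side into $x^{\beta+r-1}(1-x^r)^{\alpha+r-1}$ times
\[
  x(1-x^r) y^{(r+1)}/(n-r)! \text{-type constant} + \bigl((\beta+r)(1-x^r) - r(\alpha+r)x^r\bigr) y^{(r)}/(\cdots),
\]
so that after cancelling the common factor $x^{\beta+r-1}(1-x^r)^{\alpha+r-1}$ and normalizing, equating the two expressions for the derivative yields a relation of the form $x(1-x^r) y^{(r+1)} + \bigl((\beta+r)(1-x^r)-r(\alpha+r)x^r\bigr)y^{(r)} = \sum_{m=0}^{r-1}(\text{const}) x^{r-m} y^{(m)}$. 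Collecting the $y^{(r)}$ terms, the $x^r y^{(r)}$ contribution on the left combines with the $x^r y^{(r)}$ term that appears on the right (the $m=r$... no: $m\le r-1$, so the $x^r y^{(r)}$ stays on the left) — this gives the coefficient $(r+\beta)$ of $y^{(r)}$ and an $x^r y^{(r)}$ term that becomes the $k=r$ term in $\sum_{k=0}^r c_{k,n}^{(\alpha,\beta)} x^k y^{(k)}$. Matching powers of $x$ against \eqref{diffr} then pins down each $c_{k,n}^{(\alpha,\beta)}$.

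The last and most laborious step is the explicit identification of the constants: one must show that the coefficient of $x^k y^{(k)}$ equals \eqref{ckn}. This requires tracking the product of the lowering-normalization factors $(n-r+1)_r$ and $1/(n-r+j+1)_{r-j} = 1/(m+\text{index}\cdots)$ against the raising coefficients $a_{j,n-r}^{(\alpha+r,\beta+r)} = (-1)^j[\binom{r}{j}(r(\alpha+r)+(\beta+r)) + \binom{r+1}{j+1} j(n-r)]$, and simplifying $(n-r+1)_r/(n-m+1)_m$ to a falling-factorial $(n-r+1)_{r-k}$ (with $k=m$), while combining the two bracketed terms into $\binom{r}{k}(r\alpha+\beta) + \binom{r+1}{k}(rn+r-kn)$. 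This is the main obstacle: it is a routine but delicate binomial/Pochhammer identity verification, exactly analogous to (but heavier than) the "compare leading and constant coefficients" step used for $r=2$. As in the $r=2$ proof, I would note that \eqref{pnabrrais} is stated for $\alpha,\beta>r-1$, but since both sides of \eqref{diffr} are polynomials in $\alpha,\beta$ for each fixed $n$, the identity extends to all $\alpha,\beta>-1$ by analytic continuation. Finally, one checks the degenerate boundary cases ($n<r$, where some $y^{(k)}\equiv 0$ and the $(n-r+1)_{r-k}$ Pochhammer factors vanish appropriately) so that the formula \eqref{ckn} holds for all $n\in\mathbb{N}$.
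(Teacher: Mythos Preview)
Your proposal is correct and follows essentially the same route as the paper: iterate the lowering operator \eqref{pnabrlow} to reach $p_{n-r}(x;\alpha+r,\beta+r)$, apply the raising operator \eqref{pnabrrais} at those shifted parameters, and convert each $p_{n-r+j}(x;\alpha+r-j,\beta+r-j)$ back into a derivative of $y$ via lowering, yielding $c_{k,n}^{(\alpha,\beta)} = -a_{r-k,n-r}^{(\alpha+r,\beta+r)}\,(n-k)!/(n-r)!$. Your added remarks on analytic continuation in $\alpha,\beta$ (since \eqref{pnabrrais} is only proved for $\alpha,\beta>r-1$) and on the degenerate range $n<r$ are points the paper passes over silently, so including them actually tightens the argument.
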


\begin{proof}
From the lowering operation \eqref{pnabrlow} one has
\[ p_n^{(r)}(x;\alpha,\beta) = \frac{n!}{(n-r)!}p_{n-r}(x;\alpha+r,\beta+r).	\]
Multiplying both sides by $x^{\beta+r}(1-x^r)^{\alpha+r}$ and differentiating gives
\begin{align*}
  &x^{\beta+r}(1-x^r)^{\alpha+r}  p_n^{(r+1)}(x;\alpha,\beta) \\ 
  &+\ x^{\beta+r-1}(1-x^r)^{\alpha+r-1} \bigl[\beta+r)(1-x^r)- r(\alpha+r)x^r \bigr] p_{n}^{(r)}(x;\alpha,\beta) \\
 = &\frac{n!}{(n-r)!}x^{\beta+r-1}(1-x^r)^{\alpha+r-1} \sum_{k=1}^r a_{k,n-r}^{(\alpha+r,\beta+r)}x^{r-k}p_{n-r+k}(x;\alpha+r-k,\beta+r-k),
\end{align*}
where we used the raising operation \eqref{pnabrrais} for the right hand side. 
Using the  lowering operation \eqref{pnabrlow} one has
\[  p_{n-r+k}(x;\alpha+r-k;\beta+r-k) = \frac{(n-r+k)!}{n!} p_{n}^{(r-k)}(x;\alpha,\beta),  \]
hence we have
\begin{multline*}
	x(1-x^r)p_n^{(r+1)}(x;\alpha,\beta) + [(\beta+r)(1-x^r)-r(\alpha+r)x^r]p_n^{(r)}(x;\alpha,\beta) \\
	- \sum_{k=1}^ra_{k,n-r}^{(\alpha+r,\beta+r)} \frac{(n-r+k)!}{(n-r)!} x^{r-k} p_n^{(r-k)}(x;\alpha,\beta)=0,
\end{multline*}
or
\begin{align*}
x(1-x^r)p_n^{(r+1)}(x;\alpha,\beta) + (\beta+r)p_n^{(r)}(x;\alpha,\beta) + \sum_{k=0}^rc_{k,n}^{(\alpha,\beta)}  x^{k} p_n^{(k)}(x;\alpha,\beta)=0,
\end{align*}
where
\[   c_{k,n}^{(\alpha,\beta)} = - a_{r-k,n-r}^{(\alpha+r,\beta+r)} \frac{(n-k)!}{(n-r)!} ,  \]
which gives \eqref{diffr} with \eqref{ckn}.
\end{proof}

\subsection{Asymptotic zero behavior}

We now investigate the asymptotic distribution of the zeros of the type I Jacobi-Angelesco polynomials $A_{\vec{n},j}^{(\alpha,\beta)}$,
 $1 \leq j \leq r$, for the multi-index $\vec{n}=(n,n,\ldots,n)$ and $n\to \infty$. From Theorem \ref{thm31} it is clear that the zeros
of $A_{\vec{n},j}^{(\alpha,\beta)}$ are copies of the zeros of $p_n(x;\alpha,\beta)$ (which are on $[0,1]$, see Lemma \ref{lem37}), 
but rotated to the interval $[0,\omega^{j-1}]$. Hence we only need to investigate the asymptotic distribution of the zeros of $p_n(x;\alpha,\beta)$ given in \eqref{pnabr}. First we prove that the zeros of $p_n(x;\alpha,\beta)$ are all on $(0,1)$ whenever $\alpha,\beta >-1$. For this we modify the standard proof of the location of zeros of orthogonal polynomials (see, e.g., \cite[Thm. 2.2.5]{Ismail}).

\begin{lemma}  \label{lem37}
Let $\alpha,\beta >-1$, then all the zeros of $p_n(x;\alpha,\beta)$ given in \eqref{pnabr} are simple and lie in the open interval $(0,1)$.
\end{lemma}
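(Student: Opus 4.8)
The plan is to adapt the classical argument that orthogonal polynomials have simple zeros inside the interval of orthogonality, but to work with the single polynomial $p_n(x;\alpha,\beta)$ and the orthogonality relations \eqref{modr} rather than with a genuine inner product. The key observation is that \eqref{modr} says precisely that $p_n(x;\alpha,\beta)$ is orthogonal on $[0,1]$ to the $n$-dimensional space spanned by $x^{r-1}, x^{2r-1}, \ldots, x^{nr-1}$ against the positive weight $x^\beta(1-x^r)^\alpha$, which for $\alpha,\beta>-1$ is integrable on $[0,1]$. Write $p_n(x;\alpha,\beta)=x^{r-1}\,(\text{something})$? No — that is not true in general, so instead I would proceed by a sign-change count.

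First I would let $x_1,\ldots,x_m$ be the points in the open interval $(0,1)$ at which $p_n(x;\alpha,\beta)$ changes sign (each of odd multiplicity), and suppose for contradiction that $m<n$. The idea is to build a ``test polynomial'' of the form $T(x) = x^{r-1}\prod_{i=1}^{m}(x^r - x_i^r)$: this is a polynomial in $x$ of degree $r-1+rm = r(m+1)-1$, and since $m\le n-1$ it is a linear combination of the monomials $x^{r-1},x^{2r-1},\ldots,x^{r(m+1)-1}$, all of which appear among $x^{r-1},\ldots,x^{rn-1}$. Hence by \eqref{modr} (which covers exactly the exponents $rj-1$ for $1\le j\le n$, i.e. it makes $p_n$ orthogonal to each of $x^{r-1},x^{2r-1},\dots,x^{rn-1}$ — note $j=1$ gives the exponent $r-1$, which is also handled since it is the ``$S_\ell-S_0$'' with $\ell$ up to $n$ argument plus the first relation) we get
\[
   \int_0^1 p_n(x;\alpha,\beta)\, T(x)\, x^\beta (1-x^r)^\alpha\, dx = 0 .
\]
On the other hand, on $(0,1)$ the factor $x^{r-1}$ is positive and $x^r-x_i^r$ has the same sign as $x-x_i$, so $T(x)$ changes sign on $(0,1)$ exactly at $x_1,\ldots,x_m$, i.e. at the same points and with the same sign pattern as $p_n(x;\alpha,\beta)$. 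Therefore the product $p_n(x;\alpha,\beta)\,T(x)$ does not change sign on $(0,1)$, is not identically zero, and is multiplied by the strictly positive weight $x^\beta(1-x^r)^\alpha$; its integral is thus strictly nonzero, a contradiction. Hence $m\ge n$, and since $p_n$ has degree $n$ this forces $m=n$ and all $n$ zeros to be simple and to lie in $(0,1)$.

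The main thing to get right — and the only real subtlety — is the bookkeeping of which exponents \eqref{modr} actually controls and checking that $\deg T$ stays within that range; one must be careful that $T$ involves only powers $x^{r-1},x^{2r-1},\dots$ and that $m\le n-1$ keeps the top power at $x^{r(m+1)-1}\le x^{rn-1}$, so no exponent outside the orthogonality range is used. A secondary point is to note that no zero can lie at $x=0$ or $x=1$: the constant term of $p_n(x;\alpha,\beta)$ is $(-1)^n\Gamma(n+\alpha+\tfrac{\beta}{r}+1)/(\Gamma(n+\alpha+1)\Gamma(\tfrac{\beta}{r}+1))\neq 0$ for $\alpha,\beta>-1$, so $p_n(0;\alpha,\beta)\neq 0$; and a sign change cannot occur at the endpoint $1$ of the integration interval anyway, so it suffices to count sign changes in the open interval. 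No complex zeros can appear either, since we have already produced $n$ real zeros of a degree-$n$ polynomial.
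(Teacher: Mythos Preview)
Your argument is correct and is essentially identical to the paper's proof: both suppose $m<n$ sign changes, build the test polynomial $\prod_{i=1}^m (x^r-x_i^r)$, and use \eqref{modr} to force a vanishing integral that should be nonzero; the only cosmetic difference is that you absorb the extra factor $x^{r-1}$ into the test polynomial $T(x)$, whereas the paper absorbs it into the weight as $x^{\beta+r-1}$. Your additional remarks about the endpoints are harmless but unnecessary, since once you have $n$ sign changes in $(0,1)$ the degree-$n$ polynomial can have no further zeros anywhere.
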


\begin{proof}
Suppose $x_1,\ldots,x_m$ are the zeros of odd multiplicity of $p_n(x;\alpha,\beta)$ that lie in $(0,1)$ and that $m < n$. Then consider
the polynomial $q_m(x) = (x^r-x_1^r)(x^r-x_2^r)\cdots(x^r-x_m^r)$. This is a polynomial of degree $rm$ with $m$ real zeros on $(0,1)$ at the points
$x_1,\ldots,x_m$ and no other sign changes on $(0,1)$. Hence $p_n(x;\alpha,\beta)q_m(x)$ has constant sign on $(0,1)$ so that
\[   \int_0^1 p_n(x;\alpha,\beta) q_m(x) x^{\beta+r-1}(1-x^r)^\alpha\, dx \neq 0. \]
But $q_m(x)x^{r-1}$ contains only powers $x^{rj-1}$ with $1 \leq j \leq m+1 \leq n$, hence by \eqref{modr} this integral is zero. This contradiction
implies that $m \geq n$ and since $p_n(x;\alpha,\beta)$ can have at most $n$ zeros on $(0,1)$, we see that $m=n$.
\end{proof}

Denote the zeros of $p_n(x;\alpha,\beta)$  by $0 < x_{1,n} < x_{2,n} < \cdots < x_{n,n} < 1$. As before we will use the normalized zero counting   
measure
\[   \mu_n = \frac{1}{n} \sum_{j=1}^n \delta_{x_{j,n}}.  \]
Then $(\mu_n)_n$ is a sequence of probability measures on $[0,1]$, and by Helley's selection principle it will contain a subsequence $(\mu_{n_k})_k$
that converges weakly to a probability measure $\mu$ on $[0,1]$. If this limit is independent of the subsequence, then we call it the asymptotic zero distribution of the zeros of $p_n(x;\alpha,\beta)$. We will investigate this by means of the Stieltjes transform
\[    S_n(z) = \int_0^1 \frac{d\mu_n(x)}{z-x} = \frac{1}{n} \frac{p_n'(z;\alpha,\beta)}{p_n(z;\alpha,\beta)}, \quad
      S(z) = \int_0^1 \frac{d\mu(x)}{z-x}  , \qquad z \in \mathbb{C} \setminus [0,1], \]
and use the Grommer-Hamburger theorem which says that $\mu_n$ converges weakly to $\mu$ if and only if $S_n$ converges uniformly on compact
subsets of $\mathbb{C} \setminus [0,1]$ to $S$ and $zS(z) \to 1$ as $z \to \infty$ (see, e.g., \cite{Geronimo}).

First we show that the weak limit of $(\mu_{n_k})_k$ has a Stieltjes transform $S$ which satisfies an algebraic equation of order $r+1$.
\begin{proposition}  \label{prop32}
Suppose $\mu_{n_k}$ converges weakly to $\mu$, then the Stieltjes transform $S$ of $\mu$ satisfies
\begin{equation}   \label{Salg}
    z(1-z^r) S^{r+1} + \sum_{\ell=0}^{r-1} (-1)^{r+\ell+1} \binom{r+1}{\ell} (r-\ell)z^\ell S^\ell = 0.  
\end{equation}
\end{proposition}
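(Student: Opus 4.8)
The plan is to mimic exactly the strategy used in the proof of Theorem~\ref{thm:zero2} for $r=2$, pushing the differential equation \eqref{diffr} through the Stieltjes transform and taking $n\to\infty$. First I would record, as in the $r=2$ case, that if $p_n'(z)=np_n(z)S_n(z)$ then iterating gives
\[
   p_n^{(k)}(z) = n^k p_n(z)\bigl( S_n^k(z) + O(1/n)\bigr),
\]
where the $O(1/n)$ term is a polynomial in $S_n$ and its derivatives of total order $k$, with each correction carrying at least one factor of $1/n$; this can be proved by an easy induction on $k$. The key observation is that in \eqref{diffr} the top term $x(1-x^r)y^{(r+1)}$ contributes $n^{r+1}$, the term $(r+\beta)y^{(r)}$ contributes only $n^r$ and is therefore negligible after dividing by $n^{r+1}p_n(z)$, and each term $c_{k,n}^{(\alpha,\beta)}x^k y^{(k)}$ contributes $n^k$ times $c_{k,n}^{(\alpha,\beta)}$, which by \eqref{ckn} grows like $n^{r-k}$ (since $(n-r+1)_{r-k}\sim n^{r-k}$ and the bracket is linear in $n$, so in fact $c_{k,n}^{(\alpha,\beta)}\sim (-1)^{r+k+1}\binom{r+1}{k}(r-k)\,n^{r-k+1}\cdot\tfrac{?}{}$ — one must be slightly careful here).

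So the second step is the bookkeeping of the exact leading power of $n$ in $c_{k,n}^{(\alpha,\beta)}$. We have $(n-r+1)_{r-k}$ is a polynomial in $n$ of degree $r-k$ with leading coefficient $1$, and $\binom{r}{k}(r\alpha+\beta)+\binom{r+1}{k}(rn+r-kn)=(r-k)\binom{r+1}{k}n + O(1)$ is linear in $n$ with leading coefficient $(r-k)\binom{r+1}{k}$. Hence
\[
   c_{k,n}^{(\alpha,\beta)} = (-1)^{r+k+1}(r-k)\binom{r+1}{k} n^{r-k+1} + (\text{lower order in }n).
\]
Therefore $c_{k,n}^{(\alpha,\beta)} x^k y^{(k)}$, after substituting $y^{(k)}=n^k p_n(z)(S_n^k+O(1/n))$ and dividing by $n^{r+1}p_n(z)$, tends to $(-1)^{r+k+1}(r-k)\binom{r+1}{k} z^k S^k(z)$. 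The top term $x(1-x^r)y^{(r+1)}$ divided by $n^{r+1}p_n(z)$ tends to $z(1-z^r)S^{r+1}(z)$, and the term $(r+\beta)y^{(r)}$ divided by $n^{r+1}p_n(z)$ is $O(1/n)\to 0$. Summing gives precisely \eqref{Salg} with $\ell$ playing the role of $k$.

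The limiting passage itself is justified exactly as for $r=2$: weak convergence of $\mu_{n_k}$ to $\mu$ gives uniform convergence of $S_{n_k}$ — and hence of all its derivatives — to $S$ and its derivatives on compact subsets of $\mathbb{C}\setminus[0,1]$, by the Weierstrass theorem on uniform convergence of analytic functions; and $S_n$, $S_n'$, \dots are uniformly bounded on such compacta since the zeros $x_{j,n}$ all lie in $[0,1]$ by Lemma~\ref{lem37}. Dividing \eqref{diffr} by $n^{r+1}p_n(z)$ and letting $n=n_k\to\infty$ then yields \eqref{Salg}. The main obstacle — and it is only a bookkeeping one — is getting the power-of-$n$ accounting in \eqref{ckn} exactly right so that every surviving term lands at order $n^{r+1}$ and the indexing $\binom{r+1}{\ell}(r-\ell)$ in \eqref{Salg} matches; one should double-check the edge cases $k=0$ and $k=r$ (for $k=r$ the term $c_{r,n}^{(\alpha,\beta)}x^r y^{(r)}$ has $c_{r,n}^{(\alpha,\beta)}=(-1)^{2r+1}\cdot 1\cdot[\binom{r}{r}(r\alpha+\beta)]$, which is $O(1)$, so it contributes $n^r$ after multiplying by $y^{(r)}$ and is negligible — consistent with the sum in \eqref{Salg} running only to $\ell=r-1$).
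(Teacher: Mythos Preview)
Your proposal is correct and follows essentially the same approach as the paper: express $p_n^{(j)}$ as $n^j p_n(S_n^j + O(1/n))$ by induction, substitute into the differential equation \eqref{diffr}, divide by $n^{r+1}p_n(z)$, and pass to the limit using $c_{k,n}^{(\alpha,\beta)}/n^{r-k+1}\to(-1)^{r+k+1}\binom{r+1}{k}(r-k)$. One tiny arithmetic slip: for $k=r$ the bracket in \eqref{ckn} is $\binom{r}{r}(r\alpha+\beta)+\binom{r+1}{r}\cdot r$, not just $\binom{r}{r}(r\alpha+\beta)$, but this is still $O(1)$ in $n$ so your conclusion that the $k=r$ term is negligible stands.
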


\begin{proof}
We first show that one can express the derivatives $p_n^{(j)}$ of $p_n(z;\alpha,\beta)$ in terms of $S_n$ and its derivatives:
\begin{equation}  \label{jderpn}
    p_n^{(j)}(z;\alpha,\beta) = n^j p_n(z;\alpha,\beta) \bigr[ S_n^j(z) + \frac{1}{n} G_{n,j}(S_n,S_n',\ldots,S_n^{(j-1)}) \bigr], \qquad j \geq 0,
\end{equation}
where $G_{n,j}$ is a polynomial in $j$ variables with coefficients of order $\mathcal{O}(1)$ in $n$. This polynomial is given recursively by
\begin{multline*}
   G_{n,j}(x_1,\ldots,x_j) = x_1 G_{n,j-1}(x_1,x_2,\ldots,x_{j-1}) + (j-1)x_1^{j-2}x_2 \\
   +  \frac{1}{n} \sum_{k=1}^{j-1} \frac{\partial}{\partial x_k} G_{n,j-1}(x_1,\ldots,x_{j-1}) x_{k+1}, 
\end{multline*}
with $G_{n,0}=0$. This can be proved by induction on $j$. It is obvious for $j=0$ and for $j=1$ it follows from $G_{n,1}(x)=0$ and
\[    S_n(z) =  \frac{1}{n} \frac{p_n'(z;\alpha,\beta)}{p_n(z;\alpha,\beta)}.  \]
Suppose now it holds for $j$, then for $j+1$  we have
\begin{multline*}   p_n^{(j+1)}(z;\alpha,\beta) = \bigl( p_n^{(j)}(z;\alpha,\beta) \bigr)' = n^j p_n'(z;\alpha,\beta) 
              \bigr[ S_n^j(z) + \frac{1}{n} G_{n,j}(S_n,S_n',\ldots,S_n^{(j-1)}) \bigr]  \\ 
                     + n^j p_n(z;\alpha,\beta) \bigr[ j S_n' S_n^{j-1} + \frac{1}{n} \frac{d}{dz} G_{n,j}(S_n,S_n',\ldots,S_n^{(j-1)}) \bigr]. 
\end{multline*}
Here we can use $p_n'(z;\alpha,\beta) = n p_n(z;\alpha,\beta) S_n(z)$ and the chain rule
\[  \frac{d}{dz} G_{n,j}(S_n,S_n',\ldots,S_n^{(j-1)}) = \sum_{k=1}^j S_n^{(k)} \frac{\partial}{\partial x_k} G_{n,j}(S_n,S_n',\ldots,S_n^{(j-1)}), \]
which after collecting terms gives the desired formula \eqref{jderpn} for $j+1$.

Now insert the expressions \eqref{jderpn} into the differential equation \eqref{diffr} to find
\begin{eqnarray*}
    0 &=& z(1-z^r) n^{r+1} p_n(z) \bigl[ S_n^{r+1} + \frac{1}{n} G_{n,r+1}(S_n,\ldots,S_n^{(r)}) \bigr]  \\
      & &  +\  (r + \beta)n^r p_n(z) \bigl[ S_n^r + \frac{1}{n} G_{n,r} (S_n,\ldots,S_n^{(r-1)}) \bigr] \\
      & &  + \sum_{\ell=0}^r c_\ell(n) z^\ell n^\ell p_n(z) \bigl[ S_n^\ell + \frac{1}{n} G_{n,\ell}(S_n,\ldots,S_n^{(\ell-1)}) \bigr]. 
\end{eqnarray*}
Divide by $n^{r+1} p_n(z)$ and let $n=n_k \to \infty$. From the convergence of
$S_{n_k}$ to $S$ uniformly on compact subsets of $\mathbb{C} \setminus [0,1]$, it also follows that all the derivatives $S_{n_k}^{(j)}$ converge
to $S^{(j)}$ uniformly on compact subsets of $\mathbb{C} \setminus [0,1]$. Furthermore, from \eqref{ckn} 
\[   \lim_{n \to \infty} \frac{c_\ell(n)}{n^{r-\ell+1}} = (-1)^{r+\ell+1} \binom{r+1}{\ell} (r-\ell), \]
so that in the limit we find the equation \eqref{Salg}. 
Now observe that the equation does not depend on the subsequence $(n_k)_k$ anymore, so that every convergent subsequence has a limit $S$
satisfying equation \eqref{Salg}.
\end{proof}  

By using the binomial theorem, one can find
\[     \sum_{\ell=0}^{r+1} (-1)^{r+\ell+1} \binom{r+1}{\ell} (r-\ell)z^\ell S^\ell = -(zS+r) (zS-1)^r, \]
so that the algebraic equation \eqref{Salg} simplifies to
\begin{equation}   \label{Salg1}
     zS^{r+1} - (zS+r)(zS-1)^r = 0. 
\end{equation}
This equation has $r+1$ solutions but we are interested in the solution which is a Stieltjes transform of a probability measure on $[0,1]$.
By using the Stieltjes-Perron inversion formula one can then find the asymptotic zero distribution measure $\mu$. 

\begin{theorem}  \label{thm39}
The asymptotic zero distribution of the polynomial $p_n(x;\alpha,\beta)$ as $n \to \infty$ is given by a measure which is absolutely
continuous on $[0,1]$ with a density $u_r$ given by $u_r(x) = r x^{r-1} w_r(x^r)$,
where $w_r$ is given by
\[   w_r(\hat{x}) = \frac{r+1}{\pi} \frac{1}{|\hat{x}'(\theta)|}  
   =\frac{r+1}{\pi \hat{x}} \frac{\sin \theta \sin r\theta \sin (r+1)\theta}{|(r+1)\sin r\theta - re^{i\theta} \sin(r+1)\theta|^2}, \]
where we used the change of variables
\begin{equation}   \label{hatx}
  \hat{x} = x^r = \frac{1}{c_r} \frac{\bigl( \sin(r+1)\theta \bigr)^{r+1}}{\sin \theta (\sin r\theta)^r}, \qquad
    0 < \theta < \frac{\pi}{r+1}, 
\end{equation}
and $c_r=(r+1)^{r+1}/r^r$. 
\end{theorem}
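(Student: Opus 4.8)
The plan is to determine the Stieltjes transform $S$ of the asymptotic zero measure in closed form by rationalizing the algebraic equation \eqref{Salg1} and then reading off the density via the Stieltjes--Perron inversion formula, in the same spirit as the proof of Theorem~\ref{thm:zero2}, but now using a trigonometric uniformization. By Proposition~\ref{prop32} together with the simplification \eqref{Salg1}, the Stieltjes transform $S$ of every subsequential weak limit $\mu$ of the zero counting measures $\mu_n$ is analytic on $\mathbb{C}\setminus[0,1]$ (the zeros lie in $(0,1)$ by Lemma~\ref{lem37}), satisfies $zS(z)\to1$ as $z\to\infty$, and obeys $zS^{r+1}-(zS+r)(zS-1)^r=0$. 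Writing $w=zS$ and expanding at infinity, the branches with $zS(z)\to1$ satisfy $w-1=\zeta/z+O(z^{-2})$ with $\zeta^r=1/(r+1)$; since $\zeta=\int_0^1 x\,d\mu>0$, only the positive real root $\zeta=(r+1)^{-1/r}$ is admissible, so exactly one branch can occur. Hence all subsequential limits coincide, and by the Grommer--Hamburger theorem $\mu_n$ converges weakly to $\mu$, whose density is $u_r(x)=-\frac1\pi\lim_{\epsilon\to0+}\Im S(x+i\epsilon)$ on $(0,1)$.

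The core of the proof is an explicit rational uniformization of the curve \eqref{Salg1}. Substituting $w=zS$ and then $u=w/(w-1)$ reduces \eqref{Salg1} to the much simpler relation
\[
   z^r=\frac{u^{\,r+1}}{(r+1)u-r}.
\]
The critical values of $z^r$, regarded as a function of $u$, are $z^r=0$ (at $u=0$), $z^r=1$ (at $u=1$, i.e. $w=\infty$), and $z^r=\infty$ (at $u=\infty$, i.e. $w=1$); a Riemann--Hurwitz count gives genus $0$, so a rational parametrization exists, and the cut in the $z^r$-variable is $[0,1]$, consistent with $x_{j,n}^r\in(0,1)$. I would then verify that the change of variable \eqref{hatx} corresponds to
\[
   u=u(\theta)=\frac{r}{r+1}\,\frac{\sin(r+1)\theta}{\sin r\theta}\,e^{i\theta},\qquad 0<\theta<\frac{\pi}{r+1},
\]
that is, $u(\theta)^{r+1}/\big((r+1)u(\theta)-r\big)=\hat x(\theta)$; this collapses exactly to \eqref{hatx} by the elementary identity $\sin(r+1)\theta\,e^{i\theta}=\sin r\theta+\sin\theta\,e^{i(r+1)\theta}$. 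One also checks that $\theta\mapsto\hat x(\theta)$ is a decreasing bijection of $(0,\pi/(r+1))$ onto $(0,1)$, with $u(\theta)\to1$, $w(\theta)\to\infty$ as $\theta\to0^+$ and $u(\theta)\to0$, $w(\theta)\to0$ as $\theta\to(\pi/(r+1))^-$, and that $\Im u(\theta)>0$ in between, so that $w(\theta)=u(\theta)/(u(\theta)-1)$ has $\Im w(\theta)<0$ and furnishes the boundary value $S(x+i0)=w(\theta)/x$ with $x=\hat x(\theta)^{1/r}$.

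It then remains to extract the density. From $S(x+i0)=w(\theta)/x$ we obtain $u_r(x)=-\frac1\pi\Im\big(w(\theta)/x\big)=-\Im w(\theta)/(\pi x)$; writing $u_r(x)=rx^{r-1}w_r(x^r)$ identifies $w_r(\hat x)=-\Im w(\theta)/(\pi r\hat x)$. Since $w=u/(u-1)$ gives $\Im w=-\Im u/|u-1|^2$, evaluating $\Im u$ and $|u-1|^2$ and using the identity $r\sin\theta\,e^{i(r+1)\theta}-\sin r\theta=-\big((r+1)\sin r\theta-re^{i\theta}\sin(r+1)\theta\big)$ yields
\[
   w_r(\hat x)=\frac{r+1}{\pi\hat x}\,\frac{\sin\theta\sin r\theta\sin(r+1)\theta}{|(r+1)\sin r\theta-re^{i\theta}\sin(r+1)\theta|^2},
\]
and logarithmic differentiation of \eqref{hatx} (using the same identity) shows this equals $\frac{r+1}{\pi|\hat x'(\theta)|}$, i.e. the pulled-back measure is the uniform measure $\frac{r+1}{\pi}\,d\theta$ on $(0,\pi/(r+1))$. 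The main obstacle is the construction in the second paragraph: guessing the uniformizing substitution $u(\theta)$ and, above all, verifying it gives the \emph{correct} branch --- that the resulting $S$ really is a Stieltjes transform, i.e. analytic off $[0,1]$, with $zS\to1$ at infinity and a positive density on the cut. Once that is settled, the remaining steps (checking that $u(\theta)$ solves the rationalized equation, and simplifying $\Im w(\theta)$ and $\hat x'(\theta)$) are routine, and are driven throughout by the single identity $\sin(r+1)\theta\,e^{i\theta}=\sin r\theta+\sin\theta\,e^{i(r+1)\theta}$.
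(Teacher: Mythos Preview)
Your proof is correct and follows essentially the same route as the paper. Your substitution $u=w/(w-1)$ with $w=zS$ is exactly the paper's $W=zS/(zS-1)$, leading to the identical equation $u^{r+1}=z^r\bigl((r+1)u-r\bigr)$, and your parametrization $u(\theta)=\frac{r}{r+1}\frac{\sin(r+1)\theta}{\sin r\theta}e^{i\theta}$ is precisely the paper's $W=\rho e^{i\theta}$; the paper derives $\rho$ by separating real and imaginary parts, whereas you state it and verify via the identity $\sin(r+1)\theta\,e^{i\theta}=\sin r\theta+\sin\theta\,e^{i(r+1)\theta}$. Your treatment of branch selection (expanding $w=zS$ at infinity and using positivity of the first moment to single out the real root of $\zeta^r=1/(r+1)$) is in fact more explicit than the paper's, which simply identifies $W_\pm$ as boundary values without further justification.
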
 

\begin{proof}
The proof is along the same lines as in \cite{Neuschel} where the asymptotic zero distribution was obtained for Jacobi-Pi\~neiro polynomials.
The algebraic equation \eqref{Salg1} can be transformed by taking
\[    W = \frac{zS}{zS-1}, \quad   zS = \frac{W}{W-1},  \]
which gives 
\begin{equation}  \label{Walg}
   W^{r+1} - (r+1) z^r W + rz^r = 0. 
\end{equation}
We look for a solution $W$ of the form $\rho e^{i\theta}$, where $\theta$ is real and $\rho >0$. Take $z=x \in [0,1]$ and insert $W=\rho e^{i\theta}$
into \eqref{Walg} to find
\[    \rho^{r+1} e^{i(r+1)\theta} - (r+1) x^r \rho e^{i\theta} + rx^r = 0.  \]
Hence the real and imaginary parts satisfy
\begin{eqnarray} 
    \rho^{r+1} \cos(r+1)\theta - (r+1) x^r \rho \cos \theta + rx^r & = & 0, \label{Wreal} \\
     \rho^{r+1} \sin(r+1)\theta - (r+1) x^r \rho \sin \theta & = & 0.    \label{Wimag}
\end{eqnarray}
From \eqref{Wimag} we find
\[   \hat{x} := x^r = \frac{\rho^r \sin(r+1)\theta}{(r+1)\sin \theta},  \]
and using this in \eqref{Wreal}, we find
\[  \rho(\hat{x}) = \frac{r}{r+1} \frac{\sin(r+1)\theta}{\sin r\theta} .  \]
Combining both gives \eqref{hatx}. Note that when $\theta \in (0,\pi/(r+1))$ one has $\rho > 0$.
So for $\hat{x} \in [0,1]$ the equation \eqref{Walg} has a solution of the form $\rho e^{i\theta}$. Observe that also $\rho e^{-i\theta}$
is a solution and in fact
\[   W_+(\hat{x}) = \lim_{\epsilon\to 0+} W(\hat{x}+i\epsilon) = \rho(\hat{x})e^{i\theta}, \quad 
      W_-(\hat{x}) = \lim_{\epsilon\to 0+} W(\hat{x}-i\epsilon) = \rho(\hat{x})e^{-i\theta}, \]
are the boundary values of the function $W$ which is analytic on $\mathbb{C} \setminus [0,1]$.
From the Stieltjes-Perron inversion formula (or the Sokhotsky-Plemelj formula) we can compute the density $u_r$ as
\[   u_r(x) = \frac{1}{2\pi i} \bigl( S_-(x)-S_+(x) \bigr) = \frac{\rho}{\pi x} \frac{\sin \theta}{|\rho e^{i\theta}-1|^2} . \]
Writing everything in terms of $\hat{x}$ gives the required result. 
\end{proof}

\begin{figure}[ht]
\centering                  
\includegraphics[width=4in]{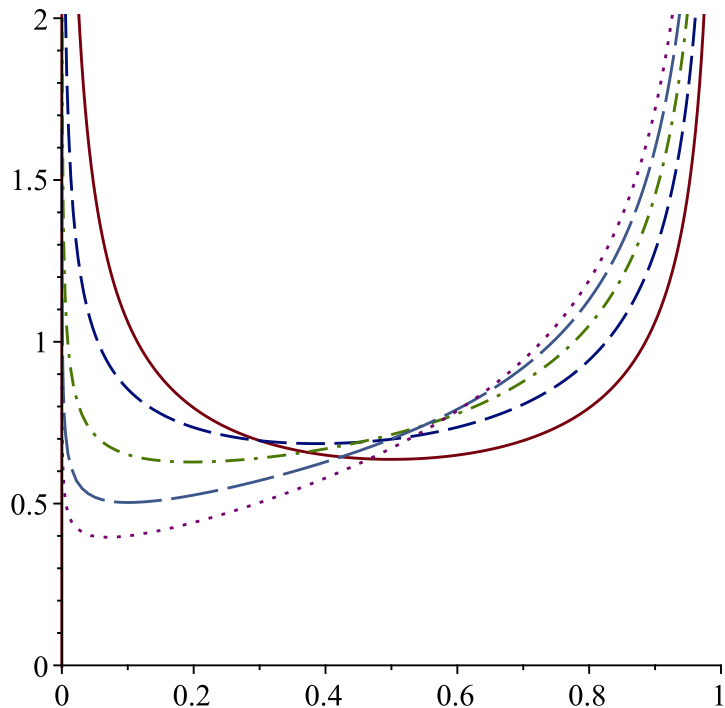}
\caption{The density $u_r$ of the asymptotic zero distribution on $[0,1]$ 
for $r=1$ (solid), $r=2$ (dash), $r=3$ (dash-dot), $r=4$ (long dash) and $r=5$ (dots).}
\label{fig:asympzero}
\end{figure}

We have plotted the densities $u_r$ on $[0,1]$ for $r=1,2,3,4,5$ in Figure \ref{fig:asympzero}. The case $r=1$ corresponds to the density
\[    u_1(x) = \frac{1}{\pi} \frac{1}{\sqrt{x(1-x)}}, \qquad 0 < x < 1 , \]
which is the well known arcsine distribution which is symmetric around $x=1/2$. For $r>1$ the symmetry is gone. The case $r=2$ corresponds to the density given in \eqref{u2}. When $x$ tends to the endpoints one has the behavior 
\begin{align*}   u_r(x) &\sim   x^{-\frac{1}{r+1}}, \qquad x \to 0, \\
                  u_r(x) &\sim  (1-x^r)^{-\frac12} , \qquad x \to 1,  
\end{align*}
so that for fixed $r$ the density $u_r$ has a singulartity of order $\frac{1}{r+1}$ at the endpoint $0$ and a singularity of order $\frac12$ at the endpoint $1$. So for $r>1$ the zeros are more dense near the endpoint $1$ than near the point $0$. This is typical for an Angelesco systems
where the zeros on all the other intervals $[0,\omega^{j-1}]$, $j=2,\ldots,r$ push the zeros on $[0,1]$ to the right.

The asymptotic behavior of the zeros is similar to the asymptotic behavior of the zeros of Jacobi-Pin\~eiro polynomials, which was studied by Neuschel and Van Assche \cite{Neuschel} and differs only by the change of variables $c_ry = x^r$.

\end{document}